\newcommand{\email}[1]{E-mail: \href{mailto:#1}{\texttt{#1}}}
\def\R{\ensuremath{\mathbb R}}
\def\Rn{\ensuremath{\mathbb R^n}}
\def\S{\ensuremath{\mathcal S}}
\def\F{\ensuremath{\mathcal F}}
\def\sigmamax{\ensuremath{\sigma_{\text{max}}}}
\def\sigmamin{\ensuremath{\sigma_{\text{min}}}}
\def\lambdamin{\ensuremath{\lambda_{\text{min}}}}
\DeclarePairedDelimiter{\norm}{\lVert}{\rVert}
\DeclareMathOperator*{\argmin}{Argmin}
\newtheorem{definition}{Definition}
\newtheorem{assumption}{Assumption}
\newtheorem{condition}{Condition}
\newtheorem{theorem}{Theorem}
\newtheorem{proposition}{Proposition}
\newtheorem{lemma}{Lemma}
\newtheorem{remark}{Remark}
\theoremstyle{definition}
\begin{document}
\thispagestyle{plain}

\setcounter{page}{1}

{\centering
{\LARGE \bfseries Full Convergence of Regularized Methods for Unconstrained Optimization}

\bigskip\bigskip
Andrea Cristofari$^*$
\bigskip

}

\begin{center}
\small{\noindent$^*$Department of Civil Engineering and Computer Science Engineering \\
University of Rome ``Tor Vergata'' \\
Via del Politecnico, 1, 00133 Rome (Italy) \\
\email{andrea.cristofari@uniroma2.it} \\
}
\end{center}

\bigskip\par\bigskip\par
\noindent \textbf{Abstract.}
Typically, the sequence of points generated by an optimization algorithm may have multiple limit points. 
Under convexity assumptions, however, (sub)gradient methods are known to generate a convergent sequence of points.
In this paper, we extend the latter property to a broader class of algorithms.
Specifically, we study unconstrained optimization methods that use local quadratic models regularized by a power $r \ge 3$ of the norm of the step.
In particular, we focus on the case where only the objective function and its gradient are evaluated.
Our analysis shows that, by a careful choice of the regularized model at every iteration, the whole sequence of points generated by this class of algorithms converges if the objective function is pseudoconvex.
The result is achieved by employing appropriate matrices to ensure that the sequence of points is variable metric quasi-Fej{\'e}r monotone.

\bigskip\par
\noindent \textbf{Keywords.} Regularized methods. Full convergence. Variable metric quasi-Fej{\'e}r monotonicity.

\section{Introduction}
Let us consider the following unconstrained optimization problem:
\begin{equation}\label{prob}
\min_{x \in \Rn} f(x),
\end{equation}
where the objective function $f \colon \Rn \to \R$ has a continuous gradient $\nabla f(x)$.
Among the huge number of algorithms proposed in the literature to solve problem~\eqref{prob},
a popular approach is represented by the class of \textit{regularized methods}, which are characterized by the use of suitable models combined with a regularization term.
In particular, here we consider \textit{quadratic models} regularized by a power $r$ of the norm of the step.
Namely, to update a point $x_k$ generated at iteration $k$, we use the following regularized model:
\begin{equation}\label{mk}
m_k(s) := f(x_k) + \nabla f(x_k)^T s + \frac 12 s^T Q_k s + \frac{\sigma_k}r \norm s^r,
\end{equation}
with symmetric matrix $Q_k \in \R^{n \times n}$ and regularization parameter $\sigma_k \ge 0$,
while, here and in the rest of the paper, $\norm{\cdot}$ denotes the Euclidean norm.
Then, a step $s_k$ is computed as an approximate minimizer of $m_k$ in order to set the next iterate as
$x_{k+1} = x_k + s_k$ if an appropriate objective decrease is achieved.

In this paper, we focus on the case where \textit{only the objective function and its gradient are evaluated} at each iteration.
In such a setting, convergence and complexity results of regularized methods can be found
in~\cite{cartis:2011a,cartis:2011b,cartis:2012b,cartis:2017}.

Our analysis shows that, when $f$ is pseudoconvex and $r \ge 3$,
\textit{the whole sequence $\{x_k\}$ generated by the regularized methods converges} if an appropriate sequence of matrices $\{Q_k\}$ is employed.
Specifically, $\{Q_k\}$ must be chosen so as to guarantee that $\{x_k\}$ is a \textit{variable metric quasi-Fej{\'e}r monotone} sequence.

In the literature, it is known that the whole sequence of points converges, under convexity assumptions,
for (sub)gradient methods~\cite{alber:1998,bertsekas:2015,burachik:1995,iusem:2003,nesterov:2013,polyak:1987}.
Our result hence extends that property to a broader class of regularized methods.
Interestingly, our analysis also shows that the convergence of the algorithm does not require the assumption of bounded level sets, which is typically
needed to obtain the $\mathcal O(1/k)$ convergence rate of regularized methods using first-order information in the convex case~\cite{cartis:2012b}.

Let us also remark that the matrix $Q_k$ used in model~\eqref{mk} is not required to approximate the Hessian (which may not exist in our setting, since we only assume access to $f$ and $\nabla f$). 
This differentiates our approach from methods that explicitly employ Hessian approximations, e.g., via finite differences~\cite{grapiglia:2022}.

The rest of the paper is organized as follows.
In Section~\ref{sec:fejer}, we review the notion of Fej{\'e}r monotonicity and its extensions.
In Section~\ref{sec:reg_methods}, we describe the class of regularized methods under analysis.
In Section~\ref{sec:analysis}, we give the main result of the paper showing the convergence of the whole sequence of points generated by the considered class of algorithms.
In Section~\ref{sec:alg_example}, we provide an example of a regularized method that generates a sequence of points converging to a minimizer of the objective function.
In Section~\ref{sec:matrix}, we describe an iterative procedure to compute appropriate matrices that satisfy the conditions required for the considered class of algorithms.
In Section~\ref{sec:num}, we analyze the practical performance of the proposed algorithmic scheme under different choices for the sequence of matrices.
Finally, some conclusions are drawn in Section~\ref{sec:conclusions}.

\section{Fej{\'e}r Monotonicity and Extensions}\label{sec:fejer}
Let us consider a sequence $\{x_k\} \subseteq \Rn$ generated by an optimization algorithm.
A common tool to prove the convergence of $\{x_k\}$ comes from the notion of \textit{Fej{\'e}r monotonicity}, defined as follows.

\begin{definition}[Fej{\'e}r monotonicity]\label{def:fejer}
Given a non-empty set $\F \subseteq \Rn$, we say that $\{x_k\} \subseteq \Rn$ is a \textit{Fej{\'e}r monotone sequence} with respect to $\F$ if,
for all $y \in \F$, it holds that
\[
\norm{x_{k+1} - y}^2 \le \norm{x_k-y}^2 \quad \forall k \ge 0.
\]
\end{definition}

It is well known that, if $\{x_k\}$ is a Fej{\'e}r monotone sequence with respect to $\F$ and $\{x_k\}$ has a limit point $x^* \in \F$,
then the whole sequence $\{x_k\}$ converges to $x^*$ (see, e.g., \cite{bertsekas:2015}).

In the literature, Fej{\'e}r monotonicity has been used to show the convergence of some optimization algorithms under appropriate assumptions.
In particular, when the objective function $f$ is convex with Lipschitz continuous gradient,
the sequence $\{x_k\}$ generated by the gradient method is Fej{\'e}r monotone
if the stepsize is chosen within an appropriate interval, hence converging to a minimizer of $f$~\cite{nesterov:2013}.
For a non-smooth convex objective function $f$, also the subgradient method produces a Fej{\'e}r monotone sequence $\{x_k\}$ when using the Polyak stepsize,
thus converging to a minimizer of $f$~\cite{polyak:1987}.

Some extensions of Fej{\'e}r monotonicity have been proposed in the literature in order to guarantee the same convergence properties for a broader family of sequences.
A first extension is represented by \textit{quasi-Fej{\'e}r monotonicity}, which is defined below.

\begin{definition}[quasi-Fej{\'e}r monotonicity]\label{def:quasi_fejer}
Given a non-empty set $\F \subseteq \Rn$, we say that $\{x_k\} \subseteq \Rn$ is a \textit{quasi-Fej{\'e}r monotone sequence} with respect to $\F$ if,
for all $y \in \F$, it holds that
\[
\norm{x_{k+1} - y}^2 \le \norm{x_k-y}^2 + \epsilon_k \quad \forall k \ge 0,
\]
where $\{\epsilon_k\}$ is a sequence of scalars such that
\[
\{\epsilon_k\} \subseteq [0,\infty), \quad \sum_{k=0}^{\infty} \epsilon_k < \infty.
\]
\end{definition}

Using quasi-Fej{\'e}r monotonicity, it is still possible to show that, under convexity assumptions,
the whole sequence generated by gradient methods converges to a minimizer of the objective function $f$
using suitable line search techniques (which include the classical Armijo line search)~\cite{burachik:1995,iusem:2003}.
In the non-smooth case, also subgradient methods with a diminishing stepsize generate a quasi-Fej{\'e}r monotone sequence,
under convexity assumptions, hence converging to a minimizer of the objective function~\cite{alber:1998,bertsekas:2015}.
Further, quasi-Fej{\'e}r monotonicity has been used even in vector optimization to show the convergence of the whole sequence of points
generated by the steepest descent method under convexity assumptions~\cite{drummond:2005}.

Additionally, a more general definition of quasi-Fej{\'e}r monotonicity was given in~\cite{iusem:1994} using
a divergence measure in place of the quadratic norm appearing in Definition~\ref{def:quasi_fejer},
whereas a comparison between different types of quasi-Fej{\'e}r monotonicity
and an analysis of their use in optimization can be found in~\cite{combettes:2001} .

Going beyond Definition~\ref{def:quasi_fejer}, a further extension
has been proposed in~\cite{combettes:2013} with the notion of \textit{variable metric quasi-Fej{\'e}r monotonicity} which
employs variable metrics to measure the distance between points.
Using that tool, weak and strong convergence results for sequences in a real Hilbert space were established in~\cite{combettes:2013}.

In the following, we report a simplified definition of variable metric quasi-Fej{\'e}r monotonicity adapted from~\cite{combettes:2013} to our purposes,
using, here and in the rest of the paper, the notation $\norm Q$ to denote the norm induced by the vector Euclidean norm for any matrix $Q$,
while $\norm{\cdot}_Q$ indicates the norm induced by a symmetric positive definite matrix $Q \in \R^{n \times n}$, that is,
$\norm v_Q = (v^T Q v)^{1/2}$ for all $v \in \Rn$.

\begin{definition}[variable metric quasi-Fej{\'e}r monotonicity]\label{def:var_quasi_fejer}
Given a non-empty set $\F \subseteq \Rn$ and a sequence of symmetric matrices $\{Q_k\} \subseteq \R^{n \times n}$ such that
\begin{align}
Q_k \succeq aI \quad \forall k \ge 0, \quad & \text{with} \quad a > 0, \label{Q_var_quasi_fejer} \\
\norm{Q_k} \le b \quad \forall k \ge 0, \quad & \text{with} \quad b < \infty, \label{Q_bounded}
\end{align}
we say that $\{x_k\} \subseteq \Rn$ is a \textit{variable metric quasi-Fej{\'e}r monotone sequence} with respect to $\F$
relative to $\{Q_k\}$ if, for all $y \in \F$, it holds that
\begin{equation}\label{var_quasi_fejer_ineq}
\norm{x_{k+1} - y}_{Q_{k+1}}^2 \le (1+\psi_k) \norm{x_k-y}^2_{Q_k} + \epsilon_k \quad \forall k \ge 0,
\end{equation}
where $\{\psi_k\}$ and $\{\epsilon_k\}$ are two sequences of scalars such that
\begin{gather}
\{\psi_k\} \subseteq [0,\infty), \quad \sum_{k=0}^{\infty} \psi_k < \infty, \label{eta_summable} \\
\{\epsilon_k\} \subseteq [0,\infty), \quad \sum_{k=0}^{\infty} \epsilon_k < \infty. \label{eps_summable}
\end{gather}
\end{definition}

Variable metric quasi-Fej{\'e}r monotonicity will be central to our analysis.
Specifically, we will use the following result to ensure the convergence of a sequence of points.

\begin{theorem}\label{th:variable_quasi_fejer}
Let $\{x_k\}$ be a variable metric quasi-Fej{\'e}r monotone sequence with respect to a set $\F$ relative to a sequence of matrices $\{Q_k\}$,
according to Definition~\ref{def:var_quasi_fejer}.
The following holds:
\begin{enumerate}[label=(\roman*)]
\item $\displaystyle{\norm{x_k-y} \le R}$ for all $k \ge 0$ and all $y \in \F$, where \\
    $\displaystyle{R := \Biggl(\frac 1a \prod_{j=0}^{\infty} (1+\psi_j) \biggl(\norm{x_0-y}_{Q_0}^2 + \sum_{i=0}^{\infty}\epsilon_i\biggr)\Biggr)^{1/2} < \infty}$
	with $a$, $\{\psi_k\}$ and $\{\epsilon_k\}$ given in Definition~\ref{def:var_quasi_fejer}; \label{prop_variable_quasi_fejer_R0}
\item $\{x_k\}$ is bounded; \label{prop_variable_quasi_fejer_bounded}
\item if $x^* \in \F$ is a limit point of $\{x_k\}$, then $\{x_k\}$ converges to $x^*$. \label{prop_variable_quasi_fejer_conv}
\end{enumerate}
\end{theorem}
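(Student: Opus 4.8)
The plan is to handle the three items in turn, reducing each to the scalar recursion that \eqref{var_quasi_fejer_ineq} produces and then exploiting the two-sided control on the metrics from \eqref{Q_var_quasi_fejer} and \eqref{Q_bounded}. For \ref{prop_variable_quasi_fejer_R0}, fix $y \in \F$ and set $d_k := \norm{x_k - y}_{Q_k}^2 \ge 0$, so that \eqref{var_quasi_fejer_ineq} reads $d_{k+1} \le (1+\psi_k)d_k + \epsilon_k$. First I would unroll this recursion by induction to obtain $d_k \le \prod_{j=0}^{k-1}(1+\psi_j)\bigl(d_0 + \sum_{i=0}^{k-1}\epsilon_i\bigr)$; since every factor satisfies $1+\psi_j \ge 1$, each partial product and partial sum is dominated by its infinite counterpart. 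Finiteness of $R$ then follows from \eqref{eta_summable} and \eqref{eps_summable}, using $\prod_{j=0}^{\infty}(1+\psi_j) \le \exp\bigl(\sum_{j=0}^{\infty}\psi_j\bigr) < \infty$ (via $1+t \le e^t$) together with $\sum_{i=0}^{\infty}\epsilon_i < \infty$. Finally, the lower bound $Q_k \succeq aI$ gives $a\norm{x_k-y}^2 \le d_k$, and dividing by $a$ (recalling $d_0 = \norm{x_0-y}_{Q_0}^2$) produces the estimate $\norm{x_k-y}^2 \le R$ with the constant $R$ defined in the statement.

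Item \ref{prop_variable_quasi_fejer_bounded} is then immediate: since $\F \neq \emptyset$, I fix any $y \in \F$ and write $\norm{x_k} \le \norm{x_k - y} + \norm y \le \sqrt R + \norm y < \infty$ for all $k$, so $\{x_k\}$ is bounded.

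The substantive part is \ref{prop_variable_quasi_fejer_conv}. Applying \eqref{var_quasi_fejer_ineq} with $y = x^*$ and setting $t_k := \norm{x_k - x^*}_{Q_k}^2 \ge 0$ gives $t_{k+1} \le (1+\psi_k)t_k + \epsilon_k$. The key step is to upgrade mere boundedness of $\{t_k\}$ to convergence, which I would do with a Robbins--Siegmund-type argument. Putting $P_k := \prod_{j=0}^{k-1}(1+\psi_j)$ (so $P_k \uparrow P_\infty < \infty$) and dividing the recursion by $P_{k+1} = (1+\psi_k)P_k$ yields $t_{k+1}/P_{k+1} \le t_k/P_k + \epsilon_k/P_{k+1}$, where $\sum_k \epsilon_k/P_{k+1} \le \sum_k \epsilon_k < \infty$. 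Hence the sequence $w_k := t_k/P_k + \sum_{i \ge k}\epsilon_i/P_{i+1}$ is non-increasing and bounded below by $0$, so it converges; since its tail term vanishes and $P_k \to P_\infty$, the sequence $\{t_k\}$ itself converges to some limit $L \ge 0$.

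It remains to show $L = 0$. As $x^*$ is a limit point of $\{x_k\}$, there is a subsequence $x_{k_j} \to x^*$, and along it the upper bound \eqref{Q_bounded} gives $t_{k_j} = \norm{x_{k_j}-x^*}_{Q_{k_j}}^2 \le b\,\norm{x_{k_j}-x^*}^2 \to 0$. Since the whole sequence $\{t_k\}$ converges, its limit coincides with that of this subsequence, forcing $L = 0$; then the lower bound \eqref{Q_var_quasi_fejer} gives $a\norm{x_k - x^*}^2 \le t_k \to 0$, so $x_k \to x^*$. I expect the only delicate point to be establishing convergence (and not just boundedness) of $\{t_k\}$ under the summable perturbations $\psi_k$ and $\epsilon_k$; once the $P_k$-normalized monotone auxiliary sequence is in place, the two-sided sandwiching by $aI \preceq Q_k$ and $\norm{Q_k}\le b$ makes the remainder routine.
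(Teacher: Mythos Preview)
Your argument is correct. For items~\ref{prop_variable_quasi_fejer_R0} and~\ref{prop_variable_quasi_fejer_bounded} it coincides with the paper's proof: both unroll~\eqref{var_quasi_fejer_ineq}, bound the partial products and sums by their infinite counterparts, and then divide by $a$ using $Q_k \succeq aI$. For item~\ref{prop_variable_quasi_fejer_conv} you take a genuinely different route. The paper does \emph{not} first show that $\{t_k\}$ converges; instead it re-uses the two-index estimate established while proving~\ref{prop_variable_quasi_fejer_R0}, namely
\[
\norm{x_k-x^*}^2 \le \frac{\zeta}{a}\Bigl(\norm{x_t-x^*}_{Q_t}^2 + \sum_{i \ge t}\epsilon_i\Bigr) \quad \text{for all } k \ge t,
\]
and then, given any $\omega>0$, chooses $t = k_\omega$ along the convergent subsequence so that both $b\norm{x_{k_\omega}-x^*}^2$ and the tail $\sum_{i \ge k_\omega}\epsilon_i$ are small enough to force the right-hand side to be at most $\omega$. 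Your Robbins--Siegmund normalization by $P_k$ is a clean alternative: it yields the extra intermediate conclusion that $\{\norm{x_k-y}_{Q_k}^2\}$ converges for every $y \in \F$, at the mild cost of introducing the auxiliary sequence $w_k$. The paper's path is slightly more elementary and recycles an inequality already on the page; yours is more modular and closer to the standard quasi-Fej\'er toolkit.
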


\begin{proof}{Proof}
Fix any $y \in \F$. Let $a$, $b$, $\{\psi_k\}$ and $\{\epsilon_k\}$ be as given in Definition~\ref{def:var_quasi_fejer}.
Since $\{\psi_k\}$ satisfies~\eqref{eta_summable}, we can define
\begin{equation}\label{prod_conv}
1 \le \zeta := \prod_{i=0}^{\infty} (1+\psi_i) < \infty.
\end{equation}
Applying the variable metric quasi-Fej{\'e}r inequality~\eqref{var_quasi_fejer_ineq} recursively from an index $k$ to an index $t \le k$, we have that
\[
\norm{x_k-y}_{Q_k}^2 \le \prod_{i=t}^{k-1} (1+\psi_i) \norm{x_t-y}_{Q_t}^2 + \sum_{i=t}^{k-2}\prod_{j=i+1}^{k-1}(1+\psi_j)\epsilon_i + \epsilon_{k-1} \quad \forall k \ge t \ge 0.
\]
Using~\eqref{prod_conv} together with the fact that $\{\psi_k\}$ and $\{\epsilon_k\}$ are sequences of non-negative real numbers from~\eqref{eta_summable}
and~\eqref{eps_summable}, respectively, it follows that
\[
\norm{x_k-y}_{Q_k}^2 \le \zeta \norm{x_t-y}_{Q_t}^2 + \zeta \sum_{i=t}^{k-2}\epsilon_i + \epsilon_{k-1}
\le \zeta \biggl(\norm{x_t-y}_{Q_t}^2 + \sum_{i=t}^{\infty}\epsilon_i\biggr) \quad \forall k \ge t \ge 0.
\]
Since, from~\eqref{Q_var_quasi_fejer}, we have $\norm{x_k-y}_{Q_k}^2 \ge a \norm{x_k-y}^2$ for all $k \ge 0$, we get
\begin{equation}\label{recurs_variable_quasi_fejer_0}
\norm{x_k-y}^2 \le \frac{\zeta}a \biggl(\norm{x_t-y}_{Q_t}^2 + \sum_{i=t}^{\infty}\epsilon_i\biggr) < \infty \quad \forall k \ge t \ge 0,
\end{equation}
where the last inequality follows from the fact that $\{\epsilon_k\}$ satisfies~\eqref{eps_summable}
together with the fact that $\zeta/a$ is finite from~\eqref{prod_conv} and~\eqref{Q_var_quasi_fejer}.
Hence, items~\ref{prop_variable_quasi_fejer_R0}--\ref{prop_variable_quasi_fejer_bounded} hold.

Now, let $x^* \in \F$ be a limit point of $\{x_k\}$, that is, there exists a subsequence $\{x_k\}_K \to x^*$, with $K \subseteq \{0,1,\ldots\}$.
Using~\eqref{eps_summable}, for any $\omega > 0$ there exists an index $k_{\omega}$ such that
\begin{align*}
\norm{x_{k_{\omega}}-x^*}^2 & \le \frac{a \omega}{2b\zeta}, \\
\sum_{i=k_{\omega}}^{\infty}\epsilon_i & \le \frac{a \omega}{2\zeta}.
\end{align*}
Hence, using~\eqref{recurs_variable_quasi_fejer_0} with $y = x^*$ and $t=k_{\omega}$, for all $k \ge k_{\omega}$ we have that
\[
\begin{aligned}
\norm{x_k-x^*}^2 \le \frac{\zeta}a\biggl(\norm{x_{k_{\omega}}-x^*}_{Q_{k_{\omega}}}^2 + \sum_{i=k_{\omega}}^{\infty}\epsilon_i\biggr)
& \le \frac{\zeta}a\biggl(\norm{Q_{k_{\omega}}} \norm{x_{k_{\omega}}-x^*}^2 + \sum_{i=k_{\omega}}^{\infty}\epsilon_i\biggr) \\
& \le \frac{\zeta}a\biggl(b \norm{x_{k_{\omega}}-x^*}^2 + \sum_{i=k_{\omega}}^{\infty}\epsilon_i\biggr) \\
& \le \frac{\zeta}a\biggl(\frac{a \omega}{2\zeta} + \frac{a \omega}{2\zeta}\biggr) \\
& = \omega,
\end{aligned}
\]
where we have used~\eqref{Q_bounded} in the third inequality.
Since this is true for any arbitrary $\omega > 0$, we conclude that $\{x_k\}$ converges to $x^*$, thus proving item~\ref{prop_variable_quasi_fejer_conv}.
\end{proof}

In the next lemma, we give necessary and sufficient conditions for $\{x_k\}$ to be a variable metric quasi-Fej{\'e}r monotone sequence.

\begin{lemma}\label{lemma:eq_var_quasi_fejer}
Let $\F \subseteq \Rn$ be a non-empty set and let $\{Q_k\} \subseteq \R^{n \times n}$ be a sequence of symmetric matrices satisfying~\eqref{Q_var_quasi_fejer}--\eqref{Q_bounded}.
Then, $\{x_k\}$ is a variable metric quasi-Fej{\'e}r monotone sequence with respect to $\F$ relative to $\{Q_k\}$, according to Definition~\ref{def:var_quasi_fejer},
if and only if, for all $y \in \F$, it holds that
\begin{equation}\label{eq_var_quasi_fejer_ineq}
\norm{x_{k+1} - y}_{Q_{k+1}}^2 \le (1+\psi_k) \norm{x_k-y}^2_{Q_k} + \theta_k \norm{x_k-y} + \epsilon_k \quad \forall k \ge 0,
\end{equation}
where $\{\psi_k\}$ and $\{\epsilon_k\}$ satisfy~\eqref{eta_summable} and~\eqref{eps_summable}, respectively,
while $\{\theta_k\}$ is a sequence of scalars such that
\begin{equation}\label{theta_summable}
\{\theta_k\} \subseteq [0,\infty), \quad \sum_{k=0}^{\infty} \theta_k < \infty.
\end{equation}
\end{lemma}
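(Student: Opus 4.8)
The statement is an equivalence, so I would treat the two implications separately, and I expect them to be very asymmetric in difficulty. The forward implication is immediate: if $\{x_k\}$ is variable metric quasi-Fej{\'e}r monotone in the sense of Definition~\ref{def:var_quasi_fejer}, then \eqref{var_quasi_fejer_ineq} holds, and this is exactly \eqref{eq_var_quasi_fejer_ineq} with the choice $\theta_k \equiv 0$, which trivially satisfies \eqref{theta_summable}. All the real work therefore lies in the converse direction, where one assumes \eqref{eq_var_quasi_fejer_ineq} with a summable nonnegative $\{\theta_k\}$ and must recover an inequality of the exact form \eqref{var_quasi_fejer_ineq}.

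The plan for the converse is to absorb the mixed linear term $\theta_k \norm{x_k-y}$ into a quadratic term plus a constant, so that it can be folded into the coefficient $(1+\psi_k)$ and the additive error $\epsilon_k$. The key tool is the lower bound \eqref{Q_var_quasi_fejer}, which gives $\norm{x_k-y}_{Q_k}^2 \ge a\norm{x_k-y}^2$, hence $\norm{x_k-y} \le a^{-1/2}\norm{x_k-y}_{Q_k}$. Combining this with the elementary inequality $t \le (t^2+1)/2$ applied to $t = \norm{x_k-y}_{Q_k}$, I would obtain
\[
\theta_k \norm{x_k-y} \le \frac{\theta_k}{\sqrt a}\norm{x_k-y}_{Q_k} \le \frac{\theta_k}{2\sqrt a}\bigl(\norm{x_k-y}_{Q_k}^2 + 1\bigr).
\]

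Substituting this bound into \eqref{eq_var_quasi_fejer_ineq} then yields
\[
\norm{x_{k+1}-y}_{Q_{k+1}}^2 \le \Bigl(1 + \psi_k + \tfrac{\theta_k}{2\sqrt a}\Bigr)\norm{x_k-y}_{Q_k}^2 + \Bigl(\epsilon_k + \tfrac{\theta_k}{2\sqrt a}\Bigr),
\]
which is precisely the form \eqref{var_quasi_fejer_ineq} with $\tilde\psi_k := \psi_k + \theta_k/(2\sqrt a)$ and $\tilde\epsilon_k := \epsilon_k + \theta_k/(2\sqrt a)$. To finish, I would verify that these new sequences meet the requirements \eqref{eta_summable} and \eqref{eps_summable}: nonnegativity is clear since $a>0$ and $\psi_k,\theta_k,\epsilon_k \ge 0$, while the summability of $\{\tilde\psi_k\}$ and $\{\tilde\epsilon_k\}$ follows from that of $\{\psi_k\}$, $\{\theta_k\}$ and $\{\epsilon_k\}$. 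Since the construction of $\tilde\psi_k$ and $\tilde\epsilon_k$ does not involve $y$, the resulting inequality holds uniformly over $y \in \F$ with a single pair of sequences, as Definition~\ref{def:var_quasi_fejer} demands.

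The step I expect to be the only genuine subtlety is exactly the handling of this linear term. A naive approach would try to bound $\norm{x_k-y}$ by a constant using the boundedness of $\{x_k\}$ from Theorem~\ref{th:variable_quasi_fejer}, but that boundedness is itself a consequence of variable metric quasi-Fej{\'e}r monotonicity, so invoking it here would be circular. The Young/AM--GM-type estimate above sidesteps this entirely, converting the linear term into a quadratic contribution (absorbed into the multiplicative factor) and a summable constant, with no a priori control on $\norm{x_k-y}$ required. This is what makes the converse go through cleanly.
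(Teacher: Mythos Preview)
Your proof is correct and reaches the same conclusion as the paper, but the converse direction is handled by a genuinely different (and arguably cleaner) device. The paper partitions the iterations according to whether $\norm{x_k-y}<1$ or $\norm{x_k-y}\ge 1$: in the first case $\theta_k\norm{x_k-y}$ is bounded by the constant $\theta_k$, and in the second case by $\theta_k\norm{x_k-y}^2 \le (\theta_k/a)\norm{x_k-y}_{Q_k}^2$; the two bounds are then merged into $\tilde\psi_k=\psi_k+\theta_k/a$ and $\tilde\epsilon_k=\epsilon_k+\theta_k$. You instead pass once through $\norm{x_k-y}\le a^{-1/2}\norm{x_k-y}_{Q_k}$ and apply $t\le(t^2+1)/2$, which collapses the case analysis into a single line and yields $\tilde\psi_k=\psi_k+\theta_k/(2\sqrt a)$, $\tilde\epsilon_k=\epsilon_k+\theta_k/(2\sqrt a)$. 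Both constructions are independent of $y$, so the uniformity required by Definition~\ref{def:var_quasi_fejer} is preserved in each. Your remark that one must not appeal to Theorem~\ref{th:variable_quasi_fejer} for boundedness, to avoid circularity, is exactly the right observation and applies equally to the paper's argument.
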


\begin{proof}{Proof}
Take any $y \in \F$ and let us distinguish the necessary and sufficient implications separately.
\begin{itemize}
\item Necessity. If Definition~\ref{def:var_quasi_fejer} is satisfied, then~\eqref{eq_var_quasi_fejer_ineq}--\eqref{theta_summable}
    hold by using $\theta_k = 0$ for all $k \ge 0$.
\item Sufficiency. Assume that~\eqref{eq_var_quasi_fejer_ineq}--\eqref{theta_summable} hold for a given $y \in \F$.
    We can partition the index set $\{0,1,\ldots\}$ into two subsets $K_1$ and $K_2$ such that
    \begin{gather*}
    k \in K_1 \quad \Leftrightarrow \quad \norm{x_k-y} < 1, \\
    k \in K_2 \quad \Leftrightarrow \quad \norm{x_k-y} \ge 1.
    \end{gather*}
    Observe that, in view of~\eqref{Q_var_quasi_fejer}, we can write
    \[
    \norm{x_k-y}_{Q_k}^2 \ge a \norm{x_k-y}^2 \ge a \norm{x_k-y} \quad \forall k \in K_2.
    \]
    Then, using~\eqref{eq_var_quasi_fejer_ineq}, we get
    \begin{align*}
    \norm{x_{k+1} - y}_{Q_{k+1}}^2 \le (1+\psi_k) \norm{x_k-y}^2_{Q_k} + \theta_k + \epsilon_k \quad & \forall k \in K_1, \\
    \norm{x_{k+1} - y}_{Q_{k+1}}^2 \le \biggl(1+\psi_k+\frac{\theta_k}{a}\biggr) \norm{x_k-y}^2_{Q_k} + \epsilon_k \quad & \forall k \in K_2.
    \end{align*}
    Since $a>0$ and $\theta_k \ge 0$ for all $k \ge 0$, it follows that
    \begin{align*}
    \norm{x_{k+1} - y}_{Q_{k+1}}^2 \le \biggl(1+\psi_k+\frac{\theta_k}{a}\biggr) \norm{x_k-y}^2_{Q_k} + \theta_k + \epsilon_k \quad & \forall k \in K_1, \\
    \norm{x_{k+1} - y}_{Q_{k+1}}^2 \le \biggl(1+\psi_k+\frac{\theta_k}{a}\biggr) \norm{x_k-y}^2_{Q_k} + \theta_k + \epsilon_k \quad & \forall k \in K_2.
    \end{align*}
    Namely, recalling that $K_1 \cup K_2 = \{0,1,\ldots\}$, we have that
    \[
    \norm{x_{k+1} - y}_{Q_{k+1}}^2 \le (1+\tilde\psi_k) \norm{x_k-y}^2_{Q_k} + \tilde \epsilon_k \quad \forall k \ge 0,
    \]
    where
    \[
    \tilde \psi_k = \psi_k+\frac{\theta_k}a \quad \text{and} \quad \tilde \epsilon_k = \epsilon_k + \theta_k.
    \]
    Using~\eqref{eta_summable}, \eqref{eps_summable} and~\eqref{theta_summable}, it follows that $\tilde \psi_k$ and $\tilde \epsilon_k$ are two sequences of scalars such that
    \begin{gather*}
    \tilde \psi_k \subseteq [0,\infty), \quad \sum_{k=0}^{\infty}\tilde \psi_k < \infty, \\
    \tilde \epsilon_k \subseteq [0,\infty), \quad \sum_{k=0}^{\infty}\tilde \epsilon_k < \infty.
    \end{gather*}
    Hence, $\{x_k\}$ is a variable metric quasi-Fej{\'e}r monotone sequence with respect to $\F$ relative to $\{Q_k\}$, according to Definition~\ref{def:var_quasi_fejer}
\end{itemize}
\end{proof}

\section{Regularized Methods}\label{sec:reg_methods}
In this section, we describe a simple framework for regularized methods which will be the subject of our analysis.
The scheme we consider here is intentionally given \textit{as general as possible} to include only the ingredients needed to ensure the convergence of
the generated sequence $\{x_k\}$ to a point $x^* \in \Rn$.
Other properties, such as the conditions under which $x^*$ is a minimizer of $f$ and the convergence rate,
will be analyzed in the next section.

Given a regularization power $r \ge 3$, at every iteration $k$ we consider the model $m_k$ defined as in~\eqref{mk} by choosing
appropriate regularization parameter $\sigma_k \ge 0$ and symmetric matrix $Q_k \in \R^{n \times n}$.
For the sake of convenience, let us also define the quadratic part in the regularized model as
\begin{equation}\label{qk}
q_k(s) := f(x_k) + \nabla f(x_k)^T s + \frac 12 s^T Q_k s,
\end{equation}
so that, from~\eqref{mk} and~\eqref{qk}, we can write
\[
m_k(s) = q_k(s) + \frac{\sigma_k}r \norm s^r.
\]
In order to get the next point $x_{k+1}$, we compute $s_k$ as an inexact minimizer of $m_k(s)$.
In particular, here we require $s_k$ to satisfy an approximate first-order condition, that is,
\begin{equation}\label{sk_cond_gmk}
\norm{\nabla m_k(s_k)} \le \tau \norm{s_k} \min\{\norm{s_k},1\},
\end{equation}
with $\tau \ge 0$.
Note that, in the considered setting where $r \ge 3$, the function $m_k(s)$ is coercive and has a global minimizer.
Consequently, a point $s_k$ satisfying~\eqref{sk_cond_gmk} can be obtained in finite time by applying a descent method for unconstrained optimization.

Then, the next point $x_{k+1}$ is obtained by means of an update rule ensuring that
\begin{equation}\label{f_decr}
x_{k+1} \ne x_k \quad \Rightarrow \quad f(x_k) - f(x_{k+1}) \ge \eta (m_k(0)-m_k(s_k)) > 0,
\end{equation}
with $\eta > 0$. We see that, by the update rule~\eqref{f_decr},
we can set $x_{k+1} = x_k + s_k$ only if the objective decrease given in~\eqref{f_decr} holds,
otherwise we have to set $x_{k+1} = x_k$.
Then, we say that $k$ is a \textit{successful iteration} if $x_{k+1} = x_k + s_k$,
denoting the set of successful iterations by $\S$. Namely,
\begin{equation}\label{S_def}
k \in \S \quad \Leftrightarrow \quad x_{k+1} = x_k + s_k.
\end{equation}
The scheme is reported in Algorithm~\ref{alg:reg_method}.

\begin{algorithm}
\caption{General scheme of a regularized method}\label{alg:reg_method}
\begin{algorithmic}[1]
\State given $r \in [3,\infty)$, $\tau \in [0,\infty)$, $\eta > 0$ and $\sigmamax \in [0,\infty)$
\State choose $x_0 \in \Rn$
\For{$k = 0,1,\ldots$}
\State compute $\sigma_k \in [0,\sigmamax]$
\State compute a symmetric matrix $Q_k \in \R^{n \times n}$
\State compute $s_k \in \Rn$ satisfying~\eqref{sk_cond_gmk}
\State set either $x_{k+1} = x_k + s_k$ or $x_{k+1} = x_k$ in order to satisfy~\eqref{f_decr}
\EndFor
\end{algorithmic}
\end{algorithm}

As mentioned at the beginning of this section, Algorithm~\ref{alg:reg_method} is a general framework in which
a number of details are intentionally left unspecified.
Namely, besides the choice of $Q_k$---which will be discussed in the next subsection---two main issues should be addressed:
how to compute $\sigma_k$ and how to guarantee the objective decrease condition~\eqref{f_decr}.

Regarding the choice of $\sigma_k$, various strategies can be found in the literature.
Common approaches make use of adaptive rules which dynamically update $\sigma_k$ during the iterations in a trust-region fashion (see, e.g.,~\cite{cartis:2011a}).
Moreover, in Section~\ref{sec:alg_example}, we will describe an algorithm where $\sigma_k$ can assume any value in $[0,\sigmamax]$
with an appropriate choice of the matrix $Q_k$ under Lipschitz continuity of $\nabla f$.

To guarantee the objective decrease condition~\eqref{f_decr}, usually $f(x_k+s_k) - f(x_k)$ is compared to the decrease achieved by
either the regularized model $m_k$ or the quadratic model $q_k$, in order to decide whether to set $x_{k+1} = x_k + s_k$ or $x_{k+1} = x_k$.
In more detail, a first possibility~\cite{cartis:2011a,cartis:2017} is to compute
\begin{equation}\label{rho1}
\rho_k = \frac{f(x_k)-f(x_k+s_k)}{m_k(0) - m_k(s_k)}
\end{equation}
and set $x_{k+1} = x_k + s_k$ if $\rho_k \ge \eta$.
By requiring $s_k$ to be such that $m_k(0) - m_k(s_k) > 0$, it is straightforward to see that this update rule satisfies~\eqref{f_decr}.
A second possibility~\cite{birgin:2017,cartis:2019} is to use
\begin{equation}\label{rho2}
\rho_k = \frac{f(x_k)-f(x_k+s_k)}{q_k(0) - q_k(s_k)}
\end{equation}
and set $x_{k+1} = x_k + s_k$ if $\rho_k \ge \eta$.
By requiring $s_k$ to be such that $q_k(0) - q_k(s_k) > 0$, also this update rule satisfies~\eqref{f_decr} since
\[
q_k(0) - q_k(s_k) = m_k(0) - m_k(s_k) + \frac{\sigma_k}r \norm{s_k}^r \ge m_k(0) - m_k(s_k).
\]
Hence, by means of the general condition~\eqref{f_decr} in Algorithm~\ref{alg:reg_method}, all the above update rules are included in our analysis.

Also note that Algorithm~\ref{alg:reg_method} encompasses the classical gradient method
defined by the $k$th iteration $x_{k+1} = x_k - \alpha_k \nabla f(x_k)$, with the stepsize $\alpha_k > 0$ chosen by an Armijo-type line search such that
$f(x_{k+1}) \le f(x_k) - \gamma \alpha_k \|\nabla f(x_k)\|^2$, where $\gamma \in (0,1)$ (see, e.g., \cite{bertsekas:1999}).
Specifically, we can use $\tau = 0$ while setting $\sigma_k = 0$ and $Q_k = \alpha_k^{-1} I$ for every iteration $k$.
It is straightforward to verify that, for every iteration $k$, such a parameter choice leads to $s_k = -\alpha_k \nabla f(x_k)$
and $m_k(0)-m_k(s_k) = (\alpha_k/2) \|\nabla f(x_k)\|^2$, implying that~\eqref{f_decr} is satisfied with $\eta = 2\gamma$.
If $\nabla f$ is Lipschitz continuous with constant $L$,
then we can also use, for every iteration $k$, a constant stepsize $\alpha_k = \alpha \in (0,2(1-\gamma)/L]$
as it still guarantees the objective decrease required by the Armijo-type line search~\cite{bertsekas:1999}.

\section{Analysis of Full Convergence}\label{sec:analysis}
In this section, we will show that the whole sequence of points $\{x_k\}$ generated by Algorithm~\ref{alg:reg_method} converges under appropriate hypotheses.

The key element in our analysis is an adequate choice of $\{Q_k\}$, reported in Condition~\ref{cond:Q} below, to
guarantee that $\{x_k\}$ is a variable metric quasi-Fej{\'e}r monotone sequence, according to Definition~\ref{def:var_quasi_fejer}, provided that $f$ is pseudoconvex.
This, in view of Theorem~\ref{th:variable_quasi_fejer}, will allow us to state the convergence of $\{x_k\}$.

\begin{condition}\label{cond:Q}
The sequence of matrices $\{Q_k\}$ used in Algorithm~\ref{alg:reg_method} is such that, for every iteration $k \ge 0$, it holds that
\begin{subequations}
\begin{align}
Q_k & \succeq aI, \label{Q_pos_def_assump} \\
Q_{k+1} & \preceq (1+\psi_k) Q_k, \label{Q_prec_assump}
\end{align}
\end{subequations}
where $\{\psi_k\}$ satisfies~\eqref{eta_summable} and
\begin{equation}\label{a_tau}
a > 2\tau.
\end{equation}
\end{condition}

From Condition~\ref{cond:Q}, we see that~\eqref{Q_pos_def_assump} and~\eqref{a_tau} impose that all the eigenvalues of  $Q_k$ are greater than $2 \tau$,
where $\tau$ is the parameter used in~\eqref{sk_cond_gmk} to control the degree of approximation when
computing $s_k$ as an inexact minimizer of $m_k$.
Moreover, \eqref{Q_prec_assump} relates two consecutive matrices and guarantees that
\begin{equation}\label{norm_Qk_succ}
\norm v^2_{Q_{k+1}} \le (1+\psi_k) \norm v^2_{Q_k} \quad \forall v \in \Rn.
\end{equation}

\begin{remark}\label{rem:Q}
The easiest way to satisfy Condition~\ref{cond:Q} is choosing a matrix $Q \succ 2\tau I$ and setting $Q_k = Q$ for all $k \ge 0$.
A more elaborate iterative procedure to compute $\{Q_k\}$ will be presented in Section~\ref{sec:matrix}.
\end{remark}

\subsection{Preliminary Results}
Let us provide some preliminary results which will be useful to get the main result in the next subsection.
First note that, for every iteration $k$, we can express the gradient of the regularized model~\eqref{mk} as follows:
\begin{equation}\label{grad_mk}
\nabla m_k(s) = \nabla f(x_k) + Q_k s + \sigma_k \norm s^{r-2} s.
\end{equation}

To begin with, in the following lemma we show that the sequence of matrices $\{Q_k\}$ is bounded.

\begin{lemma}\label{lemma:Q_ub}
Let $\{Q_k\}$ be the sequence of matrices used in Algorithm~\ref{alg:reg_method}.
If Condition~\ref{cond:Q} is satisfied, then $b \in \R$ exists such that
\[
\|Q_k\| \le b \quad \forall k \ge 0.
\]
\end{lemma}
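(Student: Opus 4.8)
The plan is to exploit the Loewner-order recursion~\eqref{Q_prec_assump} to dominate every $Q_k$ by a fixed multiple of $Q_0$, and then to pass from this spectral comparison to the induced matrix norm. First I would unroll the recursion: applying $Q_{j+1} \preceq (1+\psi_j) Q_j$ for $j = 0,1,\ldots,k-1$ and using that each factor $(1+\psi_j)$ is a nonnegative scalar—so that multiplying a positive semidefinite inequality by it preserves the ordering, and transitivity then chains the comparisons together—I obtain
\[
Q_k \preceq \Biggl(\prod_{j=0}^{k-1}(1+\psi_j)\Biggr) Q_0 \quad \forall k \ge 0.
\]
Since $\{\psi_k\}$ satisfies~\eqref{eta_summable}, the infinite product $\zeta := \prod_{j=0}^{\infty}(1+\psi_j)$ converges and is finite, exactly as already recorded in~\eqref{prod_conv}; as every partial product is bounded above by $\zeta$, this gives $Q_k \preceq \zeta Q_0$ for all $k$.

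The second step is to convert this into a bound on $\norm{Q_k}$. Each $Q_k$ is symmetric and, by~\eqref{Q_pos_def_assump}, positive definite, so its induced Euclidean norm coincides with its largest eigenvalue, namely $\norm{Q_k} = \lambda_{\max}(Q_k) = \max_{\norm v = 1} v^T Q_k v$. The inequality $Q_k \preceq \zeta Q_0$ means $v^T Q_k v \le \zeta\, v^T Q_0 v$ for every $v \in \Rn$, and taking the maximum over unit vectors yields $\lambda_{\max}(Q_k) \le \zeta\, \lambda_{\max}(Q_0) = \zeta \norm{Q_0}$. Setting $b := \zeta \norm{Q_0} < \infty$ then establishes the claim.

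I do not anticipate a genuine obstacle here: the whole argument reduces to the monotonicity of $\lambda_{\max}$ under the Loewner order, combined with the convergence of the product already guaranteed by~\eqref{prod_conv}. The only point needing minor care is justifying that a positive semidefinite inequality may be scaled by the nonnegative factors $(1+\psi_j)$ and that the ordering survives the passage to the limit of the partial products; both follow immediately from the definition of $\preceq$ and the nonnegativity of $\{\psi_k\}$ in~\eqref{eta_summable}.
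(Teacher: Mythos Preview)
Your argument is correct and essentially coincides with the paper's proof: both unroll the recursion~\eqref{Q_prec_assump} to obtain $Q_k \preceq \bigl(\prod_{j=0}^{k-1}(1+\psi_j)\bigr)Q_0 \preceq \zeta Q_0$ and then conclude $\norm{Q_k} \le \zeta\norm{Q_0}$. The only cosmetic difference is that the paper carries out the unrolling by fixing the unit eigenvector $v_k$ of $Q_k$ and repeatedly applying~\eqref{norm_Qk_succ} to the quadratic form $v_k^T Q_j v_k$, whereas you work directly at the level of the Loewner order; the two presentations are equivalent and yield the same bound $b = \zeta\norm{Q_0}$.
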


\begin{proof}{Proof}
Fix any iteration $k \ge 1$. Since $Q_k \in \R^{n \times n}$ is symmetric and positive definite from Condition~\ref{cond:Q}, we can define $v_k$
as a unit-norm eigenvector of $Q_k$ associated to the largest eigenvalue of $Q_k$.
Namely,
\begin{equation}\label{norm_Qk}
v_k^T Q_k v_k = \norm{Q_k}, \quad \norm{v_k} = 1.
\end{equation}
Using~\eqref{norm_Qk} and~\eqref{norm_Qk_succ}, we can write
\begin{equation}\label{vQv}
\norm{Q_k} = v_k^T Q_k v_k = \norm{v_k}^2_{Q_k} \le (1+\psi_{k-1}) \norm{v_k}^2_{Q_{k-1}} = (1+\psi_{k-1}) v_k^T Q_{k-1} v_k.
\end{equation}
Since $\{\psi_k\}$ satisfies~\eqref{eta_summable} in view of Condition~\ref{cond:Q}, we can define
\[
1 \le \zeta := \prod_{i=0}^{\infty} (1+\psi_i) < \infty
\]
and, applying~\eqref{vQv} recursively, we get
\[
\norm{Q_k} \le \prod_{i=0}^{k-1} (1+\psi_i) v_k^T Q_0 v_k \le \zeta \norm{Q_0},
\]
where, in the last inequality, we have used the fact that $\norm{v_k} = 1$ from~\eqref{norm_Qk}.
Then, the desired result holds with $b = \zeta \norm{Q_0}$.
\end{proof}

Now, we show how we can lower bound the decrease in the regularized model and in the objective function.

\begin{lemma}\label{lemma:decr}
If Condition~\ref{cond:Q} is satisfied, then, for every iteration $k$ of Algorithm~\ref{alg:reg_method}, we have
\begin{enumerate}[label=(\roman*)]
\item $\displaystyle{m_k(0) - m_k(s_k) \ge c \norm{s_k}^2}$, \label{lemma:decr_m}
\item $\displaystyle{f(x_k) - f(x_{k+1}) \ge \eta c \norm{x_{k+1}-x_k}^2}$, \label{lemma:decr_f}
\end{enumerate}
where
\[
c := \Bigl(\frac12a-\tau\Bigr) > 0.
\]
\end{lemma}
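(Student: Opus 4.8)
The plan is to prove item~\ref{lemma:decr_m} by a direct algebraic computation and then obtain item~\ref{lemma:decr_f} almost for free from it together with the update rule~\eqref{f_decr}. The starting point for \ref{lemma:decr_m} is the observation that $m_k(0)=f(x_k)$, so that
\[
m_k(0)-m_k(s_k) = -\nabla f(x_k)^T s_k - \frac12 s_k^T Q_k s_k - \frac{\sigma_k}{r}\norm{s_k}^r .
\]
The key idea is to eliminate the unknown quantity $\nabla f(x_k)$ in favor of the (controlled) residual $\nabla m_k(s_k)$. Using the gradient expression~\eqref{grad_mk} evaluated at $s_k$, I would substitute $\nabla f(x_k) = \nabla m_k(s_k) - Q_k s_k - \sigma_k\norm{s_k}^{r-2}s_k$, which gives $-\nabla f(x_k)^T s_k = -\nabla m_k(s_k)^T s_k + s_k^T Q_k s_k + \sigma_k\norm{s_k}^{r}$. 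Collecting terms then yields the clean identity
\[
m_k(0)-m_k(s_k) = -\nabla m_k(s_k)^T s_k + \frac12 s_k^T Q_k s_k + \sigma_k\Bigl(1-\frac1r\Bigr)\norm{s_k}^r .
\]

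From here the three terms are bounded separately. The last term is nonnegative because $\sigma_k\ge 0$ and $r\ge 3$ imply $1-1/r>0$, so it can simply be discarded. For the middle term, Condition~\ref{cond:Q} (specifically $Q_k\succeq aI$ from~\eqref{Q_pos_def_assump}) gives $\frac12 s_k^T Q_k s_k \ge \frac12 a\norm{s_k}^2$. The first term is where the approximate stationarity condition enters: by Cauchy--Schwarz and then~\eqref{sk_cond_gmk},
\[
-\nabla m_k(s_k)^T s_k \ge -\norm{\nabla m_k(s_k)}\,\norm{s_k} \ge -\tau\norm{s_k}^2\min\{\norm{s_k},1\} \ge -\tau\norm{s_k}^2 ,
\]
where the last step uses $\min\{\norm{s_k},1\}\le 1$. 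Combining the three bounds produces $m_k(0)-m_k(s_k)\ge(\tfrac12 a-\tau)\norm{s_k}^2 = c\norm{s_k}^2$, and the hypothesis $a>2\tau$ from~\eqref{a_tau} guarantees $c>0$, which settles item~\ref{lemma:decr_m}.

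For item~\ref{lemma:decr_f} I would distinguish the two cases of the update rule. If $x_{k+1}=x_k$, both sides vanish and the inequality holds trivially. If instead $k\in\S$, then $x_{k+1}-x_k=s_k$, and~\eqref{f_decr} together with item~\ref{lemma:decr_m} gives $f(x_k)-f(x_{k+1})\ge\eta\bigl(m_k(0)-m_k(s_k)\bigr)\ge\eta c\norm{s_k}^2=\eta c\norm{x_{k+1}-x_k}^2$, as required. I do not expect any serious obstacle in this lemma; the only point demanding care is the sign bookkeeping in the substitution that yields the identity above, and the correct use of $\min\{\norm{s_k},1\}\le 1$ so that the bound holds uniformly without any assumption on the size of $\norm{s_k}$.
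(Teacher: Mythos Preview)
Your proof is correct and follows essentially the same approach as the paper: both use the gradient expression~\eqref{grad_mk} to replace $\nabla f(x_k)^T s_k$ by quantities involving $\nabla m_k(s_k)^T s_k$, bound the latter via Cauchy--Schwarz and~\eqref{sk_cond_gmk}, apply $Q_k\succeq aI$, and discard the nonnegative regularization term, with item~\ref{lemma:decr_f} following immediately from~\eqref{f_decr}. The only cosmetic difference is that you first derive the clean identity and then bound its terms, whereas the paper first bounds $-\nabla f(x_k)^T s_k$ and then substitutes; the computations are otherwise identical.
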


\begin{proof}{Proof}
Fix any iteration $k \ge 0$.
Recalling the expression of $\nabla m_k$ given in~\eqref{grad_mk}, we can write
\begin{equation}\label{gm_s}
\norm{\nabla m_k(s_k)} \norm{s_k} \ge \nabla m_k(s_k)^T s_k = \nabla f(x_k)^T s_k + s_k^T Q_k s_k + \sigma_k \norm{s_k}^r.
\end{equation}
Using the approximate optimality condition on $s_k$ given~\eqref{sk_cond_gmk}, we also have
\begin{equation}\label{gm_ub}
\norm{\nabla m_k(s_k)} \le \tau \norm{s_k} \min\{\norm{s_k},1\} \le \tau \norm{s_k}.
\end{equation}
From~\eqref{gm_s}--\eqref{gm_ub}, it follows that
\begin{equation}\label{gs}
-\nabla f(x_k)^T s_k \ge s_k^T Q_k s_k + \sigma_k \norm{s_k}^r - \tau \norm{s_k}^2.
\end{equation}
Hence, recalling the expression of $m_k$ given in~\eqref{mk}, we get
\[
\begin{aligned}
m_k(0) - m_k(s_k) & = -\nabla f(x_k)^T s_k - \frac 12 s_k^T Q_k s_k - \frac{\sigma_k}r \norm{s_k}^r \\
                      & \ge \frac 12 s_k^T Q_k s_k - \tau \norm{s_k}^2 + \frac{\sigma_k(r-1)}r \norm{s_k}^r \\
                      & \ge \Bigl(\frac 12 a - \tau\Bigr) \norm{s_k}^2 + \frac{\sigma_k(r-1)}r \norm{s_k}^r,
\end{aligned}
\]
where we have used~\eqref{gs} in the first inequality and~\eqref{Q_pos_def_assump} from Condition~\ref{cond:Q} in the second inequality.
Since, from the instructions of Algorithm~\ref{alg:reg_method}, we have $\sigma_k \ge 0$ and $r \ge 3$, then item~\ref{lemma:decr_m} is proved.

Item~\ref{lemma:decr_f} follows from item~\ref{lemma:decr_m} and the objective decrease condition~\eqref{f_decr}.
Finally, note that $c>0$ follows from~\eqref{a_tau} in Condition~\ref{cond:Q}.
\end{proof}

An immediate consequence of the above lemma is the monotonicity of $\{f(x_k)\}$. 

\begin{lemma}\label{lemma:f_conv}
Let $\{x_k\}$ be the sequence generated by Algorithm~\ref{alg:reg_method}.
If Condition~\ref{cond:Q} is satisfied, then $\{f(x_k)\}$ is monotonically non-increasing.
\end{lemma}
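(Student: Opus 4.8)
The plan is to obtain the claim directly from Lemma~\ref{lemma:decr}\ref{lemma:decr_f}, which I would treat as the workhorse. For an arbitrary iteration $k \ge 0$, that lemma already provides
\[
f(x_k) - f(x_{k+1}) \ge \eta c \norm{x_{k+1}-x_k}^2,
\]
so all that remains is to check the sign of the right-hand side. The constant $c = \tfrac12 a - \tau$ is strictly positive by~\eqref{a_tau} in Condition~\ref{cond:Q}, and $\eta > 0$ by the instructions of Algorithm~\ref{alg:reg_method}; since $\norm{x_{k+1}-x_k}^2 \ge 0$, the product $\eta c \norm{x_{k+1}-x_k}^2$ is nonnegative. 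Hence $f(x_k) - f(x_{k+1}) \ge 0$, that is, $f(x_{k+1}) \le f(x_k)$.

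As this inequality holds for every $k \ge 0$, the sequence $\{f(x_k)\}$ is monotonically non-increasing, which is exactly the assertion. I do not anticipate any genuine obstacle: the substantive work—bounding the objective reduction from below by a multiple of $\norm{x_{k+1}-x_k}^2$—has already been carried out in Lemma~\ref{lemma:decr}, and the present statement merely records the sign of that reduction. The only point worth a remark is that the unsuccessful case $x_{k+1} = x_k$ is handled automatically, since then both sides of the displayed inequality vanish and $f(x_{k+1}) = f(x_k)$; equivalently, one could split into the two cases of the update rule~\eqref{f_decr} and reach the same conclusion.
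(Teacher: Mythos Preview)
Your proposal is correct and follows essentially the same approach as the paper: invoke item~\ref{lemma:decr_f} of Lemma~\ref{lemma:decr} and use $\eta > 0$ (with $c > 0$) to conclude the right-hand side is nonnegative. The paper's proof is a one-line version of exactly this argument.
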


\begin{proof}{Proof}
It follows from item~\ref{lemma:decr_f} of Lemma~\ref{lemma:decr} recalling, from the instructions of Algorithm~\ref{alg:reg_method}, that $\eta > 0$.
\end{proof}

When $f$ is bounded from below, we show in the following two lemmas that $\{\norm{s_k}^p\}_{k \in \S}$ is summable for any $p \ge 2$, and consequently,
thanks to the approximate optimality condition on $s_k$ given in~\eqref{sk_cond_gmk}, $\{\norm{\nabla m_k(s_k)}\}_{k \in \S}$ is also summable.

\begin{lemma}\label{lemma:sk}
Let $\{x_k\}$ be the sequence generated by Algorithm~\ref{alg:reg_method}.
If Condition~\ref{cond:Q} is satisfied and $f$ is bounded from below, then there exists $T := \max_{k \in \S} \norm{s_k} < \infty$ and
\[
\sum_{k \in \S} \norm{s_k}^p < \infty \quad \forall p \ge 2.
\]
\end{lemma}

\begin{proof}{Proof}
The result is straightforward if $\S$ is a finite set, so let us consider the case where $\S$ is an infinite set.
From item~\ref{lemma:decr_f} of Lemma~\ref{lemma:decr}, we can write
\[
f(x_0) - \inf_{x \in \Rn} f(x) \ge f(x_0) - f(x_h) = \sum_{k=0}^{h-1} (f(x_k)-f(x_{k+1})) \ge \eta c \sum_{k=0}^{h-1}\norm{x_{k+1}-x_k}^2 \quad \forall h \ge 0.
\]
Letting $h \to \infty$, we obtain
\[
f(x_0) - \inf_{x \in \Rn} f(x) \ge \eta c \sum_{k=0}^{\infty} \norm{x_{k+1}-x_k}^2.
\]
Since $\inf_{x \in \Rn} f(x)$ is finite by assumption and recalling, from the instructions of Algorithm~\ref{alg:reg_method} and Lemma~\ref{lemma:decr}, that $\eta c>0$, we get
\[
\sum_{k=0}^{\infty} \norm{x_{k+1}-x_k}^2 < \infty.
\]
Hence, from the instructions of Algorithm~\ref{alg:reg_method} and the definition of $\S$ given in~\eqref{S_def}, it follows that
\[
\sum_{k=0}^{\infty} \norm{x_{k+1}-x_k}^2 = \sum_{k \in \S} \norm{x_{k+1}-x_k}^2 = \sum_{k \in \S} \norm{s_k}^2 < \infty.
\]
This implies that $\{\norm{s_k}\}_{k \in \S} \to 0$ and we can define $T < \infty$ as in the assertion, leading to
\[
\sum_{k \in \S} \norm{s_k}^p = \sum_{k \in \S} \norm{s_k}^2 \norm{s_k}^{p-2} \le T^{p-2} \sum_{k \in \S} \norm{s_k}^2 < \infty \quad \forall p \ge 2.
\]
\end{proof}

\begin{lemma}\label{lemma:gmk}
Let $\{x_k\}$ be the sequence generated by Algorithm~\ref{alg:reg_method}.
If Condition~\ref{cond:Q} is satisfied and $f$ is bounded from below, then
\[
\sum_{k \in \S} \norm{\nabla m_k(s_k)} < \infty.
\]
\end{lemma}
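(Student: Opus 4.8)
The plan is to derive the claim directly from the approximate first-order condition~\eqref{sk_cond_gmk} imposed on each step $s_k$, combined with the summability of the squared step norms already established in Lemma~\ref{lemma:sk}. The only quantitative work needed is a one-line bound relating $\norm{\nabla m_k(s_k)}$ to $\norm{s_k}^2$, after which the result follows by summation.

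First I would simplify the right-hand side of~\eqref{sk_cond_gmk}. Since $\min\{\norm{s_k},1\} \le \norm{s_k}$ holds trivially (the minimum of a set is bounded above by any of its elements), one obtains the uniform quadratic estimate
\[
\norm{\nabla m_k(s_k)} \le \tau \norm{s_k} \min\{\norm{s_k},1\} \le \tau \norm{s_k}^2 \quad \forall k \ge 0.
\]
This is the key inequality, and it is valid irrespective of whether $\tau > 0$ (if $\tau = 0$ both sides vanish). Note that this crude bound conveniently avoids having to treat the two regimes $\norm{s_k} \le 1$ and $\norm{s_k} > 1$ separately.

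Next I would restrict attention to the successful iterations $k \in \S$ and sum the displayed inequality, invoking Lemma~\ref{lemma:sk} with $p = 2$. Since Condition~\ref{cond:Q} holds and $f$ is bounded from below by hypothesis, Lemma~\ref{lemma:sk} guarantees $\sum_{k \in \S} \norm{s_k}^2 < \infty$, whence
\[
\sum_{k \in \S} \norm{\nabla m_k(s_k)} \le \tau \sum_{k \in \S} \norm{s_k}^2 < \infty,
\]
which is precisely the assertion.

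I expect no genuine obstacle here: all the substantive effort---converting the objective-decrease property of Lemma~\ref{lemma:decr} into summability of the step norms via the lower bound on $f$ and the telescoping of $\{f(x_k)\}$---has already been carried out in Lemma~\ref{lemma:sk}. The present statement is thus essentially a corollary, obtained by transferring that summability from $\{\norm{s_k}^2\}_{k \in \S}$ to $\{\norm{\nabla m_k(s_k)}\}_{k \in \S}$ through the approximate optimality condition~\eqref{sk_cond_gmk}.
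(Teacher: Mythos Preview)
Your proof is correct and follows the same strategy as the paper: bound $\norm{\nabla m_k(s_k)}$ by $\tau\norm{s_k}^2$ via~\eqref{sk_cond_gmk} and then invoke Lemma~\ref{lemma:sk}. Your argument is in fact slightly cleaner: the paper first uses $\{\norm{s_k}\}_{k\in\S}\to 0$ to conclude that $\min\{\norm{s_k},1\}=\norm{s_k}$ \emph{eventually}, whereas your direct observation $\min\{\norm{s_k},1\}\le\norm{s_k}$ gives the quadratic bound for all $k$ at once and removes the need to split off finitely many initial terms.
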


\begin{proof}{Proof}
The result is straightforward if $\S$ is a finite set, so let us consider the case where $\S$ is an infinite set.
From Lemma~\ref{lemma:sk}, it follows that $\{\|s_k\|\}_{k \in \S} \to 0$.
Then, using the approximate optimality condition on $s_k$ given~\eqref{sk_cond_gmk}, an iteration $\bar k \in \S$ exists such that
\[
\norm{\nabla m_k(s_k)} \le \tau \norm{s_k}^2 \quad \forall k \ge \bar k, \, k \in \S.
\]
The desired result follows by invoking Lemma~\ref{lemma:sk} again and noting that, from the instructions of Algorithm~\ref{alg:reg_method}, we have $\tau < \infty$.
\end{proof}

Let us conclude this subsection of preliminary results by relating, for every iteration $k$, the optimality violation $\norm{\nabla f(x_k)}$ to $\norm{s_k}$.
\begin{lemma}\label{lemma:sk_gk}
Let $\{x_k\}$ be the sequence generated by Algorithm~\ref{alg:reg_method}. If Condition~\ref{cond:Q} is satisfied, then
\[
\norm{\nabla f(x_k)} \le (\tau + b + \sigmamax T^{r-2}) \norm{s_k} \quad \forall k \in \S,
\]
where $b$ and $T$ are defined in Lemma~\ref{lemma:Q_ub} and Lemma~\ref{lemma:sk}, respectively.
\end{lemma}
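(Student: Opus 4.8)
The plan is to start from the closed-form expression of the model gradient in~\eqref{grad_mk}, solve it for $\nabla f(x_k)$, and then estimate each resulting term. Evaluating~\eqref{grad_mk} at $s = s_k$ and rearranging gives
\[
\nabla f(x_k) = \nabla m_k(s_k) - Q_k s_k - \sigma_k \norm{s_k}^{r-2} s_k,
\]
so that, by the triangle inequality and submultiplicativity of the induced matrix norm,
\[
\norm{\nabla f(x_k)} \le \norm{\nabla m_k(s_k)} + \norm{Q_k}\,\norm{s_k} + \sigma_k \norm{s_k}^{r-1}.
\]

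The core of the argument is then to bound the three terms on the right-hand side, each by a constant multiple of $\norm{s_k}$. For the first term, the approximate optimality condition~\eqref{sk_cond_gmk} gives $\norm{\nabla m_k(s_k)} \le \tau \norm{s_k}\min\{\norm{s_k},1\} \le \tau \norm{s_k}$, exactly as already recorded in~\eqref{gm_ub}. For the second term, Lemma~\ref{lemma:Q_ub} (which applies under Condition~\ref{cond:Q}) yields $\norm{Q_k} \le b$, hence $\norm{Q_k}\,\norm{s_k} \le b\,\norm{s_k}$.

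For the third term I would use $\sigma_k \le \sigmamax$ together with the uniform bound $\norm{s_k} \le T$ valid for every $k \in \S$ by the definition $T = \max_{k\in\S}\norm{s_k}$ in Lemma~\ref{lemma:sk}. Since $r \ge 3$, the exponent $r-2$ is at least $1$, so $t \mapsto t^{r-2}$ is nondecreasing on $[0,\infty)$ and therefore $\norm{s_k}^{r-2} \le T^{r-2}$; writing $\sigma_k \norm{s_k}^{r-1} = \sigma_k \norm{s_k}^{r-2}\norm{s_k}$ then gives $\sigma_k \norm{s_k}^{r-1} \le \sigmamax T^{r-2}\norm{s_k}$. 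Summing the three bounds produces the claimed inequality for all $k \in \S$.

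As for difficulty, this is essentially a one-line triangle-inequality estimate, so there is no genuine obstacle; the only point requiring care is that the bound $\norm{s_k} \le T$ — and hence the appearance of $T^{r-2}$ — is available only for successful indices $k \in \S$, which is precisely why the statement is restricted to $k \in \S$. It is also worth keeping in mind that the finiteness of $T$ rests on the hypotheses under which Lemma~\ref{lemma:sk} defines it, and that restricting to $\S$ is exactly what lets us convert the superlinear term $\norm{s_k}^{r-1}$ into a bound that is linear in $\norm{s_k}$.
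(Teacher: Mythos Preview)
Your proof is correct and follows essentially the same approach as the paper's: rearrange~\eqref{grad_mk} at $s_k$, apply the triangle inequality, and bound the three terms using~\eqref{sk_cond_gmk}, Lemma~\ref{lemma:Q_ub}, and $\sigma_k\le\sigmamax$ together with $\norm{s_k}\le T$ from Lemma~\ref{lemma:sk} (with $r\ge 3$).
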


\begin{proof}{Proof}
Fix any iteration $k \in \S$. Using the approximate optimality condition on $s_k$ given~\eqref{sk_cond_gmk}
and recalling the expression of $\nabla m_k$ given in~\eqref{grad_mk}, we get
\[
\begin{aligned}
\norm{\nabla f(x_k)} & = \norm{\nabla m_k(s_k) - Q_k s_k - \sigma_k \norm{s_k}^{r-2} s_k} \\
                     & \le \norm{\nabla m_k(s_k)} + \norm{Q_k s_k} + \norm{\sigma_k \norm{s_k}^{r-2} s_k} \\
                     & \le \tau \norm{s_k} \min\{\norm{s_k},1\} + (\norm{Q_k} + \sigma_k \norm{s_k}^{r-2}) \norm{s_k} \\
                     & \le (\tau + \norm{Q_k} + \sigmamax \norm{s_k}^{r-2}) \norm{s_k},
\end{aligned}
\]
where, in the last inequality, we have used the fact that $\sigma_k \le \sigmamax$ from the instructions of Algorithm~\ref{alg:reg_method}.
The desired result is hence obtained by observing that $\norm{Q_k}$ can be upper bounded by $b$ in view of Lemma~\ref{lemma:Q_ub},
while $\norm{s_k}^{r-2}$ can be upper bounded by $T^{r-2}$in view of Lemma~\ref{lemma:sk} and the fact that $r \ge 3$ from the instructions of Algorithm~\ref{alg:reg_method}.
\end{proof}

\subsection{Main Result}
To establish the convergence of the whole sequence $\{x_k\}$ generated by Algorithm~\ref{alg:reg_method},
we need to assume pseudoconvexity of the objective function $f$ and the existence of a minimizer, as stated below.

\begin{assumption}\label{assump:f}
The objective function $f$ is pseudoconvex and has a global minimizer.
\end{assumption}

We recall that pseudoconvexity of a continuously differentiable function $f$ means that
\[
\nabla f(x)^T(y-x) \ge 0 \quad \Rightarrow \quad f(y) \ge f(x) \quad \forall x,y \in \Rn,
\]
or equivalently,
\begin{equation}\label{pseudoconv}
f(y) < f(x) \quad \Rightarrow \quad \nabla f(x)^T(y-x) < 0 \quad \forall x,y \in \Rn.
\end{equation}
We also recall that a point $x^*$ is a global minimizer of a pseudoconvex function $f$
if and only if $\nabla f(x^*)=0$.

We now show that, under Assumption~\ref{assump:f} and Condition~\ref{cond:Q}, the sequence $\{x_k\}$ generated by Algorithm~\ref{alg:reg_method} is variable metric quasi-Fej{\'e}r monotone according to Definition~\ref{def:var_quasi_fejer}.

\begin{proposition}\label{prop:seq_alg_variable_quasi_fejer}
Let $\{x_k\}$ be the sequence generated by Algorithm~\ref{alg:reg_method}.
If Assumption~\ref{assump:f} holds and Condition~\ref{cond:Q} is satisfied,
then $\{x_k\}$ is a variable metric quasi-Fej{\'e}r monotone sequence with respect to $\F$ relative to $\{Q_k\}$, according to Definition~\ref{def:var_quasi_fejer}, where
\begin{equation}\label{set_F}
\F := \Bigl\{x \in \Rn \colon f(x) \le \lim_{k \to \infty} f(x_k)\Bigr\}.
\end{equation}
\end{proposition}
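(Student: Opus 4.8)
The plan is to invoke the equivalence in Lemma~\ref{lemma:eq_var_quasi_fejer}: rather than verifying Definition~\ref{def:var_quasi_fejer} directly, I would establish inequality~\eqref{eq_var_quasi_fejer_ineq} for every $y \in \F$ with suitably summable sequences $\{\psi_k\}$, $\{\theta_k\}$, $\{\epsilon_k\}$. Observe first that $\F$ is non-empty, since it contains any global minimizer of $f$ (which exists by Assumption~\ref{assump:f}), and that $f$ is therefore bounded below, so Lemmas~\ref{lemma:Q_ub}--\ref{lemma:sk_gk} all apply. Moreover, $\{f(x_k)\}$ is non-increasing by Lemma~\ref{lemma:f_conv} and bounded below, so $f^* := \lim_{k\to\infty} f(x_k)$ exists with $f(x_k) \ge f^*$ for all $k$; hence every $y \in \F$ satisfies $f(y) \le f^* \le f(x_k)$, a fact I will use to bring in pseudoconvexity.

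I would split the verification of~\eqref{eq_var_quasi_fejer_ineq} according to whether $k \in \S$. For an unsuccessful iteration, $x_{k+1} = x_k$, and~\eqref{norm_Qk_succ} from Condition~\ref{cond:Q} immediately gives $\norm{x_{k+1}-y}^2_{Q_{k+1}} \le (1+\psi_k)\norm{x_k-y}^2_{Q_k}$, so $\theta_k = \epsilon_k = 0$ suffices. For $k \in \S$ we have $x_{k+1} = x_k + s_k$, and applying~\eqref{norm_Qk_succ} and then expanding the quadratic form in the $Q_k$-metric (using symmetry of $Q_k$) yields
\[
\norm{x_{k+1}-y}^2_{Q_{k+1}} \le (1+\psi_k)\Bigl[\norm{x_k-y}^2_{Q_k} + 2(x_k-y)^T Q_k s_k + \norm{s_k}^2_{Q_k}\Bigr].
\]
The term $\norm{s_k}^2_{Q_k} \le b\norm{s_k}^2$ (Lemma~\ref{lemma:Q_ub}) is summable over $\S$ by Lemma~\ref{lemma:sk} and feeds into $\epsilon_k$, so the whole difficulty concentrates in the cross term $2(x_k-y)^T Q_k s_k$.

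To handle that cross term I would substitute $Q_k s_k = \nabla m_k(s_k) - \nabla f(x_k) - \sigma_k\norm{s_k}^{r-2}s_k$ from~\eqref{grad_mk}, splitting it into three contributions. By Cauchy--Schwarz, $|(x_k-y)^T\nabla m_k(s_k)| \le \norm{x_k-y}\,\norm{\nabla m_k(s_k)}$ produces a $\theta_k\norm{x_k-y}$ term whose coefficient is summable over $\S$ by Lemma~\ref{lemma:gmk}; likewise $\sigma_k\norm{s_k}^{r-2}|(x_k-y)^T s_k| \le \sigmamax\norm{s_k}^{r-1}\norm{x_k-y}$ gives a second $\theta_k\norm{x_k-y}$ term, summable because $r-1\ge 2$ so that Lemma~\ref{lemma:sk} applies. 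The decisive contribution is $-(x_k-y)^T\nabla f(x_k) = \nabla f(x_k)^T(y-x_k)$, which I claim is $\le 0$ and may therefore be discarded.

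The main obstacle — and the only place pseudoconvexity is genuinely used — is proving $\nabla f(x_k)^T(y-x_k) \le 0$ from $f(y) \le f(x_k)$ alone. The strict form~\eqref{pseudoconv} covers $f(y) < f(x_k)$ directly, but the boundary case $f(y) = f(x_k)$ (possible once $f(x_k)$ already equals $f^*$) needs the non-strict implication. I would derive it by contradiction: if $\nabla f(x_k)^T(y-x_k) > 0$, then $g(t) := f(x_k + t(y-x_k))$ on $[0,1]$ is strictly increasing at $t=0$ while $g(1) = f(y) \le f(x_k) = g(0)$, so $g$ attains an interior maximum at some $t^*\in(0,1)$ with $\nabla f(z)^T(y-x_k)=0$ for $z := x_k + t^*(y-x_k)$; then $\nabla f(z)^T(x_k - z) = 0 \ge 0$ forces $f(x_k) \ge f(z) > f(x_k)$ by pseudoconvexity, a contradiction. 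With this implication secured, the cross term is controlled, and collecting the $\theta_k$ and $\epsilon_k$ pieces — each absorbing the bounded factor $1+\psi_k \le \prod_{j}(1+\psi_j) < \infty$ — gives~\eqref{eq_var_quasi_fejer_ineq} with summable $\{\psi_k\}$, $\{\theta_k\}$, $\{\epsilon_k\}$. Lemma~\ref{lemma:eq_var_quasi_fejer} then delivers the claim.
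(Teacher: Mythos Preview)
Your proposal is correct and follows essentially the same route as the paper: invoke Lemma~\ref{lemma:eq_var_quasi_fejer}, split by successful/unsuccessful iterations, expand the $Q_k$-norm, substitute $Q_k s_k$ via~\eqref{grad_mk}, and control the three resulting pieces by Lemmas~\ref{lemma:Q_ub}, \ref{lemma:sk} and~\ref{lemma:gmk}. The only difference is that you work harder than necessary on the pseudoconvexity step: the paper observes that whenever $x_{k+1}\ne x_k$ (hence $s_k\ne 0$), Lemma~\ref{lemma:decr}\ref{lemma:decr_f} gives the \emph{strict} inequality $f(x_k)>f(x_{k+1})\ge f(y)$, so~\eqref{pseudoconv} applies directly and your separate treatment of the boundary case $f(y)=f(x_k)$ is not needed.
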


\begin{proof}{Proof}
First note that $\F$ is well defined and non-empty.
This follows from Lemma~\ref{lemma:f_conv} and Assumption~\ref{assump:f}, which ensure that $\{f(x_k)\}$ converges to a value greater than or equal to the minimum value of $f$.

Now, consider the sequences of scalars $\{\psi_k\}$, $\{\epsilon_k\}$ and $\{\theta_k\}$ defined as follow:
\begin{subequations}\label{seqsk}
\begin{align}
\{\psi_k\} & \text{ from Condition~\ref{cond:Q}, i.e, satisfying~\eqref{eta_summable}}, \\
\epsilon_k & =
    \begin{cases}
    (1+\psi_k)\norm{s_k}_{Q_k}^2 & \quad \text{if } k \in \S, \\
    0 & \quad \text{otherwise},
    \end{cases} \\
\theta_k & =
    \begin{cases}
    2(1+\psi_k)(\norm{\nabla m_k(s_k)} + \sigma_k \norm{s_k}^{r-1}) & \quad \text{if } k \in \S, \\
    0 & \quad \text{otherwise}.
    \end{cases}
\end{align}
\end{subequations}
To obtain the desired result, we want to show that all the hypothesis of Lemma~\ref{lemma:eq_var_quasi_fejer} are satisfied, with $\F$ given in the assertion,
by using $\{Q_k\}$ satisfying Condition~\ref{cond:Q} and $\{\psi_k\}$, $\{\epsilon_k\}$, $\{\theta_k\}$ defined as in~\eqref{seqsk}.
Namely, we want to show that
\begin{enumerate}[label=(\roman*)]
\item $\{Q_k\}$ satisfies~\eqref{Q_var_quasi_fejer}--\eqref{Q_bounded};
\item $\{\psi_k\}$, $\{\epsilon_k\}$ and $\{\theta_k\}$ satisfy~\eqref{eta_summable}, \eqref{eps_summable} and~\eqref{theta_summable}, respectively;
\item inequality~\eqref{eq_var_quasi_fejer_ineq} holds for all $y \in \F$.
\end{enumerate}

Let us first observe that $\{Q_k\}$ satisfies~\eqref{Q_var_quasi_fejer}--\eqref{Q_bounded} from~\eqref{Q_pos_def_assump}, \eqref{a_tau} and Lemma~\ref{lemma:Q_ub}.
Moreover, recalling~\eqref{seqsk}, the following holds:
\begin{itemize}
\item $\{\psi_k\}$ satisfies~\eqref{eta_summable} by definition;
\item From Lemma~\ref{lemma:Q_ub}, we have
    \[
    \epsilon_k \le \Bigl(1 + \sup_{k \in \S} \psi_k \Bigr)\norm{Q_k} \norm{s_k}^2 \le \Bigl(1 + \sup_{k \in \S} \psi_k\Bigr) b \norm{s_k}^2 \quad \forall k \in \S.
    \]
    Using Lemma~\ref{lemma:sk} and the fact that $0 \le \sup_{k \in \S} \psi_k < \infty$ since $\{\psi_k\}$ satisfies~\eqref{eta_summable},
    it follows that $\{\epsilon_k\}$ satisfies~\eqref{eps_summable};
\item From the fact that $0 \le \sup_{k \in \S} \sigma_k  \le \sigmamax < \infty$
    by the instructions of Algorithm~\ref{alg:reg_method}, we have
    \[
    \theta_k \le 2\Bigl(1 + \sup_{k \in \S} \psi_k \Bigr)(\norm{\nabla m_k(s_k)} + \sigmamax \norm{s_k}^{r-1}) \quad \forall k \in \S.
    \]
    Using Lemma~\ref{lemma:sk} with $p = r-1 \ge 2$, Lemma~\ref{lemma:gmk} and the fact that $0 \le \sup_{k \in \S} \psi_k < \infty$ 
	since $\{\psi_k\}$ satisfies~\eqref{eta_summable}, it follows that $\{\theta_k\}$ satisfies~\eqref{theta_summable}.
\end{itemize}
What is left to show is that~\eqref{eq_var_quasi_fejer_ineq} holds for all $y \in \F$.
Consider any iteration $k \ge 0$ and fix any $y \in \F$. Using~\eqref{norm_Qk_succ}, we have that
\begin{equation}\label{contr_quasi_fejer}
\norm{x_{k+1}-y}^2_{Q_{k+1}} \le (1+\psi_k)\norm{x_{k+1}-y}^2_{Q_k}.
\end{equation}
We distinguish two cases depending whether $x_{k+1} = x_k$ or $x_{k+1} \ne x_k$.
\begin{enumerate}[label=(\roman*)]
\item Assume that $x_{k+1} = x_k$. Using~\eqref{contr_quasi_fejer} together with the fact that $\{\theta_k\} \subseteq [0,\infty)$ and $\{\epsilon_k\} \subseteq [0,\infty)$,
	we can write
    \[
    \norm{x_{k+1}-y}^2_{Q_{k+1}} \le (1+\psi_k)\norm{x_k-y}^2_{Q_k} \le (1+\psi_k) \norm{x_k-y}_{Q_k}^2 + \theta_k \norm{x_k-y} + \epsilon_k.
    \]
    Then, \eqref{eq_var_quasi_fejer_ineq} holds.
\item Assume that $x_{k+1} \ne x_k$. From the instructions of Algorithm~\ref{alg:reg_method} and the definition of $\S$ given in~\eqref{S_def}, we have that
	\begin{equation}\label{sk_nonzero2}
    s_k = x_{k+1} - x_k \ne 0
    \end{equation}
	and
	\begin{equation}\label{k_succ}
	k \in \S.
	\end{equation}   
    Hence, using item~\ref{lemma:decr_f} of Lemma~\ref{lemma:decr} and the fact that $\eta>0$ from the instructions of Algorithm~\ref{alg:reg_method}, we have that
    \begin{equation}\label{fk}
    f(x_k) > f(x_{k+1}) \ge f(y),
    \end{equation}
	where the last inequality follows from the fact that $y \in \F$.
    Now, from simple calculations and using~\eqref{sk_nonzero2}, we can write
	\begin{equation}\label{eq_quasi_fejer}
    \begin{aligned}
    \norm{x_{k+1}-y}_{Q_k}^2 & = \norm{x_k-y}_{Q_k}^2 + \norm{x_{k+1}-x_k}_{Q_k}^2 + 2(x_k-y)^T Q_k (x_{k+1}-x_k) \\
                               & = \norm{x_k-y}_{Q_k}^2 + \norm{s_k}_{Q_k}^2 + 2(x_k-y)^T Q_k s_k.
    \end{aligned}
	\end{equation}
    In order to upper bound $2(x_k-y)^T Q_k s_k$ in~\eqref{eq_quasi_fejer},
	note that, recalling the expression of $\nabla m_k$ given in~\eqref{grad_mk}, we have
    \[
    Q_k s_k = \nabla m_k(s_k) - \sigma_k \norm{s_k}^{r-2} s_k - \nabla f(x_k).
    \]
    Hence,
	\[
    \begin{aligned}
    (x_k-y)^T Q_k s_k & = (\nabla m_k(s_k) - \sigma_k \norm{s_k}^{r-2} s_k)^T (x_k-y) + \nabla f(x_k)^T (y-x_k) \\
                        & \le (\norm{\nabla m_k(s_k)} + \sigma_k \norm{s_k}^{r-1}) \norm{x_k-y} + \nabla f(x_k)^T (y-x_k) \\
                        & \le (\norm{\nabla m_k(s_k)} + \sigma_k \norm{s_k}^{r-1}) \norm{x_k-y},
    \end{aligned}
	\]
    where the last inequality, recalling~\eqref{fk}, follows from the pseudoconvexity condition~\eqref{pseudoconv} of Assumption~\ref{assump:f}.
    Combining the above chain of inequalities with~\eqref{eq_quasi_fejer}, we obtain
    \[
    \norm{x_{k+1}-y}_{Q_k}^2 \le \norm{x_k-y}_{Q_k}^2 + \norm{s_k}_{Q_k}^2 + 2(\norm{\nabla m_k(s_k)} + \sigma_k \norm{s_k}^{r-1}) \norm{x_k-y}.
    \]
    Multiplying all terms by $(1+\psi_k)$ and using~\eqref{contr_quasi_fejer}, we get
	\[
    \begin{aligned}
    & \norm{x_{k+1}-y}_{Q_{k+1}}^2 \le \\
    & (1+\psi_k) \norm{x_k-y}_{Q_k}^2 + (1+\psi_k)\norm{s_k}_{Q_k}^2 + 2(1+\psi_k)(\norm{\nabla m_k(s_k)} + \sigma_k \norm{s_k}^{r-1}) \norm{x_k-y}.
    \end{aligned}
	\]
    Namely, using~\eqref{seqsk} and recalling~\eqref{k_succ}, we have
    \[
    \norm{x_{k+1}-y}_{Q_{k+1}}^2 \le (1+\psi_k) \norm{x_k-y}_{Q_k}^2 + \theta_k \norm{x_k-y} + \epsilon_k,
    \]
    thus satisfying~\eqref{eq_var_quasi_fejer_ineq}.
\end{enumerate}
\end{proof}

Finally, by combining Proposition~\ref{prop:seq_alg_variable_quasi_fejer} and Theorem~\ref{th:variable_quasi_fejer},
we can easily prove the main result of the paper, that is, that the whole sequence $\{x_k\}$ generated by Algorithm~\ref{alg:reg_method} converges
under Assumption~\ref{assump:f} and Condition~\ref{cond:Q}.

\begin{theorem}\label{th:conv}
Let $\{x_k\}$ be the sequence generated by Algorithm~\ref{alg:reg_method}.
If Assumption~\ref{assump:f} holds and Condition~\ref{cond:Q} is satisfied, then $x^* \in \Rn$ exists such that
\[
\lim_{k \to \infty} x_k = x^*.
\]
\end{theorem}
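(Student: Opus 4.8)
The plan is to assemble the full-convergence claim directly from the two pillars already in place: the variable metric quasi-Fej\'er monotonicity of $\{x_k\}$ established in Proposition~\ref{prop:seq_alg_variable_quasi_fejer}, and the abstract convergence result in Theorem~\ref{th:variable_quasi_fejer}. First I would invoke Proposition~\ref{prop:seq_alg_variable_quasi_fejer} to conclude that, under Assumption~\ref{assump:f} and Condition~\ref{cond:Q}, the sequence $\{x_k\}$ is variable metric quasi-Fej\'er monotone with respect to the set $\F$ defined in~\eqref{set_F} relative to $\{Q_k\}$; recall that $\F$ is non-empty, as already observed in the proof of that proposition, since the global minimizer guaranteed by Assumption~\ref{assump:f} attains a value not exceeding $\lim_{k\to\infty} f(x_k)$ and hence lies in $\F$.

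Given this, Theorem~\ref{th:variable_quasi_fejer}\ref{prop_variable_quasi_fejer_bounded} immediately yields that $\{x_k\}$ is bounded, so by the Bolzano--Weierstrass theorem it admits at least one limit point $x^*$; that is, there is a subsequence $\{x_k\}_K \to x^*$ for some $K \subseteq \{0,1,\ldots\}$. To close the argument via Theorem~\ref{th:variable_quasi_fejer}\ref{prop_variable_quasi_fejer_conv}, what remains is to verify that this limit point belongs to $\F$.

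This last verification is the only substantive step, and it follows from the behaviour of the objective values. Since Assumption~\ref{assump:f} guarantees a global minimizer, $f$ is bounded from below; combined with the monotone non-increasing property of $\{f(x_k)\}$ from Lemma~\ref{lemma:f_conv}, the sequence $\{f(x_k)\}$ converges to some value $L := \lim_{k\to\infty} f(x_k)$, which is exactly the threshold defining $\F$ in~\eqref{set_F}. Because $f$ is continuous, passing to the limit along $K$ gives $f(x^*) = \lim_{k \in K} f(x_k) = L$, so in particular $f(x^*) \le L$ and hence $x^* \in \F$.

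With $x^* \in \F$ a limit point of $\{x_k\}$, Theorem~\ref{th:variable_quasi_fejer}\ref{prop_variable_quasi_fejer_conv} applies and yields $\lim_{k\to\infty} x_k = x^*$, which is the asserted conclusion. I do not expect any genuine obstacle here: the heavy lifting---establishing the variable metric quasi-Fej\'er inequality and translating it into boundedness and into subsequence-to-whole-sequence convergence---has already been carried out in Proposition~\ref{prop:seq_alg_variable_quasi_fejer} and Theorem~\ref{th:variable_quasi_fejer}. The only point requiring a moment of care is confirming membership of the limit point in $\F$, which is immediate from continuity of $f$ together with the monotone convergence of the objective values.
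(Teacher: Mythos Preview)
Your proposal is correct and follows essentially the same approach as the paper's own proof: invoke Proposition~\ref{prop:seq_alg_variable_quasi_fejer} for quasi-Fej\'er monotonicity, use Theorem~\ref{th:variable_quasi_fejer}\ref{prop_variable_quasi_fejer_bounded} for boundedness to extract a limit point, check membership of that limit point in $\F$ via continuity of $f$ and monotone convergence of $\{f(x_k)\}$ (Lemma~\ref{lemma:f_conv}), and conclude with Theorem~\ref{th:variable_quasi_fejer}\ref{prop_variable_quasi_fejer_conv}. The only difference is that you spell out a few justifications (Bolzano--Weierstrass, boundedness of $f$ from below) that the paper leaves implicit.
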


\begin{proof}{Proof}
From Proposition~\ref{prop:seq_alg_variable_quasi_fejer}, we have that
$\{x_k\}$ is a variable metric quasi-Fej{\'e}r monotone sequence with respect to $\F$ relative to $\{Q_k\}$,
where $\F$ is defined as in~\eqref{set_F}.
From point~\ref{prop_variable_quasi_fejer_bounded} of Theorem~\ref{th:variable_quasi_fejer}, it follows that $\{x_k\}$ is bounded and hence
has a limit point $x^*$.
Then, using the monotonicity of $\{f(x_k)\}$ from Lemma~\ref{lemma:f_conv} and the continuity of $f$, we get that $\{f(x_k)\} \to f(x^*)$, that is, $x^* \in \F$.
The desired result finally follows from item~\ref{prop_variable_quasi_fejer_conv} of Theorem~\ref{th:variable_quasi_fejer}.
\end{proof}

\section{An Example of a Convergent Algorithm}\label{sec:alg_example}
In Theorem~\ref{th:conv}, we have shown that the whole sequence $\{x_k\}$ generated by Algorithm~\ref{alg:reg_method} converges to a point $x^* \in \Rn$,
provided that the objective function $f$ is pseudoconvex and the sequence of matrices $\{Q_k\}$ satisfies Condition~\ref{cond:Q}.
Note that Theorem~\ref{th:conv} by itself does not say anything about the nature of $x^*$, that is, whether or not $x^*$ is a minimizer of $f$.
This is due to the fact that, in Algorithm~\ref{alg:reg_method}, we have considered a general framework where some details were intentionally left unspecified.

To guarantee that the whole sequence $\{x_k\}$ converges to \textit{a minimizer} of a pseudoconvex function $f$,
we need to refine Algorithm~\ref{alg:reg_method} in order to guarantee that
\begin{equation}\label{g_to_zero}
\lim_{k \to \infty} \norm{\nabla f(x_k)} = 0.
\end{equation}
The above limit, together with Theorem~\ref{th:conv}, will ensure that the whole sequence $\{x_k\}$ converges to $x^*$ and the latter is a minimizer of $f$.
In the next proposition, we show that this surely occurs when we have an infinite number of successful iterations.

\begin{proposition}\label{prop:conv_opt}
Let $\{x_k\}$ be the sequence generated by Algorithm~\ref{alg:reg_method} and assume that $\S$ is an infinite set.
If Assumption~\ref{assump:f} holds and Condition~\ref{cond:Q} is satisfied, then
\[
\lim_{k \to \infty} x_k = x^*
\]
such that $x^*$ is a minimizer of $f$.
\end{proposition}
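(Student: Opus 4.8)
The plan is to combine the whole-sequence convergence already established in Theorem~\ref{th:conv} with the vanishing of $\nabla f$ along the successful iterations, and then to identify the common limit as a stationary point, which for a pseudoconvex function is automatically a global minimizer. Note first that Assumption~\ref{assump:f} guarantees that $f$ has a global minimizer and is therefore bounded from below, so every preliminary lemma of Section~\ref{sec:analysis} requiring this hypothesis is available. I would then invoke Theorem~\ref{th:conv} to obtain a point $x^* \in \Rn$ with $x_k \to x^*$ for the whole sequence. Since the characterization recalled just after~\eqref{pseudoconv} states that a stationary point of a pseudoconvex function is a global minimizer, the problem reduces to showing that $\nabla f(x^*) = 0$.

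To prove stationarity I would work on the successful iterations. Because $\S$ is infinite and $f$ is bounded from below, Lemma~\ref{lemma:sk} yields $\{\norm{s_k}\}_{k \in \S} \to 0$. Feeding this into the gradient bound of Lemma~\ref{lemma:sk_gk}, namely $\norm{\nabla f(x_k)} \le (\tau + b + \sigmamax T^{r-2}) \norm{s_k}$ for all $k \in \S$, gives $\{\norm{\nabla f(x_k)}\}_{k \in \S} \to 0$. On the other hand, continuity of $\nabla f$ together with $x_k \to x^*$ yields $\nabla f(x_k) \to \nabla f(x^*)$ for the whole sequence; restricting this convergence to the indices $k \in \S$ and comparing with the previous limit forces $\nabla f(x^*) = 0$. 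Pseudoconvexity then makes $x^*$ a global minimizer, and the already-known convergence $x_k \to x^*$ completes the statement; as a byproduct, continuity of $\nabla f$ delivers the full limit~\eqref{g_to_zero}.

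The argument is essentially a bookkeeping combination of the earlier lemmas, so I do not expect a serious obstacle. The single point requiring care is the transfer of the vanishing-gradient property from the subsequence indexed by $\S$ to the limit point $x^*$. This step relies crucially on the fact that Theorem~\ref{th:conv} provides convergence of the \emph{entire} sequence rather than mere boundedness: it is precisely this that forces the subsequence $\{x_k\}_{k \in \S}$ to share the same limit $x^*$, allowing continuity of $\nabla f$ to be applied along $\S$. The hypothesis that $\S$ be infinite is exactly what keeps this subsequence present in the tail and prevents the scheme from stalling at a point where no sufficient-decrease step is ever accepted.
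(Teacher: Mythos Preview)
Your proposal is correct and follows essentially the same route as the paper: invoke Theorem~\ref{th:conv} for whole-sequence convergence, use Lemmas~\ref{lemma:sk} and~\ref{lemma:sk_gk} on the infinite set $\S$ to get $\norm{\nabla f(x_k)} \to 0$ along $\S$, and then pass to the limit via continuity of $\nabla f$ to conclude $\nabla f(x^*)=0$, hence global minimality by pseudoconvexity. Your added remark that Assumption~\ref{assump:f} supplies the lower bound on $f$ needed by Lemmas~\ref{lemma:sk} and~\ref{lemma:sk_gk} is a point the paper leaves implicit.
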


\begin{proof}{Proof}
Theorem~\ref{th:conv} ensures that $x^* \in \Rn$ exists such that $\{x_k\} \to x^*$.
The continuity of $\nabla f$ hence implies that $\{\nabla f(x_k)\} \to \nabla f(x^*)$.
Therefore, using the fact that $\S$ is an infinite set, from Lemma~\ref{lemma:sk_gk} we can write
\[
\norm{\nabla f(x^*)} = \lim_{k \to \infty} \norm{\nabla f(x_k)} =  \lim_{k \in \S} \norm{\nabla f(x_k)} \le (\tau + b + \sigmamax T^{r-2}) \lim_{k \in \S} \norm{s_k},
\]
with $\tau + b + \sigmamax T^{r-2} < \infty$.
Since Lemma~\ref{lemma:sk} implies that $\{\norm{s_k}\}_{k \in \S}$ converges to $0$, it follows that $\norm{\nabla f(x^*)} = 0$.
As $f$ is pseudoconvex from Assumption~\ref{assump:f}, we conclude that $x^*$ is a minimizer of $f$.
\end{proof}

In Algorithm~\ref{alg:reg_method_v2}, we provide an example of a scheme that uses all the features of Algorithm~\ref{alg:reg_method}
and guarantees the convergence to a minimizer of a pseudoconvex function $f$ by leveraging Proposition~\ref{prop:conv_opt}.

\begin{algorithm}
\caption{Example of a regularized method with convergence guarantees}\label{alg:reg_method_v2}
\begin{algorithmic}[1]
\State given $r \in [3,\infty)$, $\tau \in [0,\infty)$ and $\sigmamax \in [0,\infty)$
\State choose $x_0 \in \Rn$
\For{$k = 0,1,\ldots$}
\State choose any $\sigma_k \in [0,\sigmamax]$
\State compute a symmetric matrix $Q_k \in \R^{n \times n}$
\State compute $s_k \in \Rn$ satisfying~\eqref{sk_cond_gmk}
\State set $x_{k+1} = x_k + s_k$
\EndFor
\end{algorithmic}
\end{algorithm}

Note that, for every iteration $k$ of Algorithm~\ref{alg:reg_method_v2}, we can choose $\sigma_k$ to be any value in $[0,\sigmamax]$ and
set $x_{k+1} = x_k + s_k$ without checking the new objective value $f(x_{k+1})$.
As shown below, the objective decrease condition~\eqref{f_decr} is guaranteed by assuming Lipschitz continuity of $\nabla f$
and strengthening the lower bound requirement on the parameter $a$ appearing in Condition~\ref{cond:Q}.

\begin{assumption}\label{assump:g_lips}
The gradient $\nabla f$ of the objective function $f$ is Lipschitz continuous with constant $L>0$, that is,
\[
\norm{\nabla f(x) - \nabla f(y)} \le L \norm{x-y} \quad \forall x,y \in \Rn.
\]
\end{assumption}

\begin{condition}\label{cond:Q_g}
Under Assumption~\ref{assump:g_lips}, the same requirements expressed in Condition~\ref{cond:Q} hold, but~\eqref{a_tau} is replaced with
\begin{equation}\label{a_tau_L_eta}
a \ge 2\tau + \frac L{(1-\eta)},
\end{equation}
where $\eta \in (0,1)$ and $L$ is the Lipschitz constant appearing in Assumption~\ref{assump:g_lips}.
\end{condition}

Now, under Assumptions~\ref{assump:f}--\ref{assump:g_lips} and Condition~\ref{cond:Q_g},
we can show that the whole sequence $\{x_k\}$ generated by Algorithm~\ref{alg:reg_method_v2} converges to a minimizer of a pseudoconvex function $f$.

\begin{theorem}\label{th:conv_1st_order}
Let $\{x_k\}$ be the sequence generated by Algorithm~\ref{alg:reg_method_v2}.
If Assumptions~\ref{assump:f}--\ref{assump:g_lips} hold and Condition~\ref{cond:Q_g} is satisfied, then all iterations belong to $\S$ and
\[
\lim_{k \to \infty} x_k = x^*
\]
such that $x^*$ is a minimizer of $f$.
\end{theorem}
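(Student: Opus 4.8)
The plan is to recognize Algorithm~\ref{alg:reg_method_v2} as a concrete instance of the general scheme of Algorithm~\ref{alg:reg_method} for which two facts hold: (i) the unconditional update $x_{k+1}=x_k+s_k$ is legitimate, in the sense that it never violates the objective decrease condition~\eqref{f_decr}, and (ii) every iteration is successful, so that $\S=\{0,1,\dots\}$ is infinite. Once these are established, the conclusion follows immediately from Proposition~\ref{prop:conv_opt}, since Assumption~\ref{assump:f} holds by hypothesis, Condition~\ref{cond:Q} is implied by Condition~\ref{cond:Q_g} (because~\eqref{a_tau_L_eta} gives $a\ge 2\tau+L/(1-\eta)>2\tau$, hence~\eqref{a_tau}), and $f$ is bounded below as it admits a global minimizer.

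First I would establish (i), which is the crux of the argument. Fix an iteration $k$ with $s_k\neq 0$ (when $s_k=0$ the implication in~\eqref{f_decr} is vacuous). By Assumption~\ref{assump:g_lips} and the standard descent lemma, $f(x_k+s_k)\le f(x_k)+\nabla f(x_k)^Ts_k+\tfrac{L}{2}\norm{s_k}^2$, so that $f(x_k)-f(x_{k+1})\ge -\nabla f(x_k)^Ts_k-\tfrac{L}{2}\norm{s_k}^2$. From the definition of $m_k$ in~\eqref{mk} one has the identity $-\nabla f(x_k)^Ts_k=(m_k(0)-m_k(s_k))+\tfrac12 s_k^TQ_ks_k+\tfrac{\sigma_k}{r}\norm{s_k}^r$, and since $Q_k\succ 0$ and $\sigma_k\ge 0$ this yields $-\nabla f(x_k)^Ts_k\ge m_k(0)-m_k(s_k)$. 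Hence $f(x_k)-f(x_{k+1})\ge (m_k(0)-m_k(s_k))-\tfrac{L}{2}\norm{s_k}^2$.

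The remaining step is to absorb the term $-\tfrac{L}{2}\norm{s_k}^2$ into a fraction of the model decrease using the strengthened condition. Item~\ref{lemma:decr_m} of Lemma~\ref{lemma:decr} gives $m_k(0)-m_k(s_k)\ge(\tfrac{a}{2}-\tau)\norm{s_k}^2$, while~\eqref{a_tau_L_eta} is precisely the requirement $\tfrac{a}{2}-\tau\ge\tfrac{L}{2(1-\eta)}$, which is equivalent to $\tfrac{L}{2}\norm{s_k}^2\le(1-\eta)(m_k(0)-m_k(s_k))$. Combining, $f(x_k)-f(x_{k+1})\ge (m_k(0)-m_k(s_k))-(1-\eta)(m_k(0)-m_k(s_k))=\eta(m_k(0)-m_k(s_k))$, and this quantity is strictly positive since $\tfrac{a}{2}-\tau>0$ and $s_k\neq 0$. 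Thus~\eqref{f_decr} is satisfied and the unconditional update is admissible.

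With (i) in hand, (ii) is immediate: by construction $x_{k+1}=x_k+s_k$ at every iteration, so by~\eqref{S_def} every index is successful and $\S$ is infinite. All hypotheses of Proposition~\ref{prop:conv_opt} are therefore met, and applying it yields $x_k\to x^*$ with $x^*$ a minimizer of $f$. I expect the only genuine obstacle to be step (i), namely matching the Lipschitz overshoot $\tfrac{L}{2}\norm{s_k}^2$ against the guaranteed model decrease; this is exactly what the sharpened bound~\eqref{a_tau_L_eta} is engineered to enable, and everything else reduces to bookkeeping that invokes previously established results.
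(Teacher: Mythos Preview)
Your proposal is correct and follows essentially the same approach as the paper: verify that the unconditional update satisfies~\eqref{f_decr} via the descent lemma and the model-decrease bound $m_k(0)-m_k(s_k)\ge(\tfrac a2-\tau)\norm{s_k}^2$, conclude that every iteration is successful, and then invoke Proposition~\ref{prop:conv_opt}. The only cosmetic differences are that the paper argues by contradiction rather than directly, and it phrases the appeal to Lemma~\ref{lemma:decr}\ref{lemma:decr_m} as ``reasoning as in the proof of\ldots'' since that lemma is formally stated for Algorithm~\ref{alg:reg_method}; your direct citation is still sound because Condition~\ref{cond:Q_g} implies Condition~\ref{cond:Q} and the computation uses only properties shared by Algorithm~\ref{alg:reg_method_v2}.
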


\begin{proof}{Proof}
Let us show that, under the assumptions and conditions stated, Algorithm~\ref{alg:reg_method_v2} is a special case of Algorithm~\ref{alg:reg_method}.
Namely, we want to show that the objective decrease condition~\eqref{f_decr} is satisfied for every iteration $k$ of Algorithm~\ref{alg:reg_method_v2}
with the same $\eta$ appearing in Condition~\ref{cond:Q_g}.

Proceeding by contradiction, assume that an iteration $k \ge 0$ exists such that~\eqref{f_decr} does not hold.
Reasoning as in the proof of item~\ref{lemma:decr_m} of Lemma~\ref{lemma:decr}, we still get
\begin{equation}\label{decr_m}
m_k(0) - m_k(s_k) \ge \Bigl(\frac12a-\tau\Bigr) \norm{s_k}^2.
\end{equation}
Then, using~\eqref{a_tau_L_eta} from Condition~\ref{cond:Q_g}, it follows that
$m_k(0) - m_k(s_k) > 0$ whenever $\norm{s_k} > 0$.
Hence, since $x_{k+1} = x_k + s_k$ from the instructions of Algorithm~\ref{alg:reg_method_v2}, the fact that~\eqref{f_decr} does not hold implies that
\begin{equation}\label{f_decr_contr}
f(x_k) - f(x_k+s_k) = f(x_k) - f(x_{k+1}) < \eta({m_k(0) - m_k(s_k)}).
\end{equation}
By the Lipschitz continuity of $\nabla f$ from Assumption~\ref{assump:g_lips}, using known results~\cite{nesterov:2013} we can write
\[
\begin{aligned}
f(x_k+s_k) & \le f(x_k) + \nabla f(x_k)^T s_k + \frac L2 \norm{s_k}^2 \\
					& \le f(x_k) + \nabla f(x_k)^T s_k + \frac L2 \norm{s_k}^2 + \frac 12 s_k^T Q_k s_k + \frac{\sigma_k}r \norm{s_k}^r,
\end{aligned}
\]
where the last inequality follows from the fact that $\frac 12 s_k^T Q_k s_k + \frac{\sigma_k}r \norm{s_k}^r \ge 0$
in view of the instructions of Algorithm~\ref{alg:reg_method_v2} together with Condition~\ref{cond:Q_g}.
Hence, recalling the expression of $m_k$, we get
\begin{equation}\label{f_decr_lips_g}
f(x_k+s_k) \le f(x_k) + \frac L2 \norm{s_k}^2 - ({m_k(0) - m_k(s_k)}).
\end{equation}
Using~\eqref{f_decr_contr} and~\eqref{f_decr_lips_g}, we get
\[
\frac L2 \norm{s_k}^2 \ge f(x_k+s_k) - f(x_k) + ({m_k(0) - m_k(s_k)}) > (1-\eta) ({m_k(0) - m_k(s_k)}).
\]
In view of~\eqref{decr_m}, it follows that
\[
\frac L2 \norm{s_k}^2 > (1-\eta) \Bigl(\frac12a-\tau\Bigr) \norm{s_k}^2,
\]
thus contradicting~\eqref{a_tau_L_eta} from Condition~\ref{cond:Q_g} and proving that Algorithm~\ref{alg:reg_method_v2} is a special case of Algorithm~\ref{alg:reg_method}.
Moreover, since $x_{k+1} = x_k + s_k$ for all $k \ge 0$ from the instructions of Algorithm~\ref{alg:reg_method_v2},
then all iterations are successful according to~\eqref{S_def}, that is,
\begin{equation}\label{U_empty}
\S = \{0,1,\ldots\}.
\end{equation}
Therefore, observing that Condition~\ref{cond:Q} is satisfied as it is implied by Condition~\ref{cond:Q_g},
it follows from Proposition~\ref{prop:conv_opt} that $\{x_k\} \to x^*$ such that $x^*$ is a minimizer of $f$.
\end{proof}

\subsection{Convergence Rate in the Convex Case}
Let us conclude the analysis of Algorithm~\ref{alg:reg_method_v2} by showing that, when $f$ is convex,
not only does the whole sequence of points $\{x_k\}$ converge to a minimizer, but the objective error also converges to zero at a rate of the order of $k^{-1}$.
Hence, we attain the same convergence rate for the objective error as other regularized methods that use first-order information in a convex setting~\cite{cartis:2012b}.

\begin{assumption}\label{assump:f_conv}
The objective function $f$ is convex and has a global minimizer.
\end{assumption}

\begin{proposition}\label{prop:rate_f_conv}
Let $\{x_k\}$ be the sequence generated by Algorithm~\ref{alg:reg_method_v2}.
If Assumptions~\ref{assump:g_lips}--\ref{assump:f_conv} hold and Condition~\ref{cond:Q_g} is satisfied, then
\[
f(x_k)-f^* \le \frac{R^2(f(x_0)-f^*)}{R^2 + \nu k(f(x_0)-f^*)} \quad \forall k \ge 1,
\]
where $f^* := \min_{x \in \Rn} f(x)$ and
\[
\nu := \frac{\eta c}{(\tau + b + \sigmamax T^{r-2})^2} > 0,
\]
while $R$, $b$, $c$ and $T$  are defined as in Theorem~\ref{th:variable_quasi_fejer}, Lemma~\ref{lemma:Q_ub}, Lemma~\ref{lemma:decr} and Lemma~\ref{lemma:sk},
respectively.
\end{proposition}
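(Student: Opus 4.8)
The plan is to derive a one-step recursion on the objective error $\Delta_k := f(x_k) - f^*$ and then unroll it in the standard way used for first-order methods on convex functions. First I would observe that, since convexity implies pseudoconvexity, Assumption~\ref{assump:f} holds and Condition~\ref{cond:Q} follows from Condition~\ref{cond:Q_g}; hence all the preliminary results apply. In particular, Proposition~\ref{prop:seq_alg_variable_quasi_fejer} together with Theorem~\ref{th:variable_quasi_fejer} guarantees that $R < \infty$, and the constants $b$, $c$ and $T$ are finite and well defined (note $f$ is bounded below since it has a minimizer, and $c>0$ because Condition~\ref{cond:Q_g} strengthens~\eqref{a_tau}). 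Fixing a global minimizer $x^*$ of $f$, we have $f(x^*) = f^*$, so $x^* \in \F$ and therefore $\norm{x_k - x^*} \le R$ for all $k$ by item~\ref{prop_variable_quasi_fejer_R0} of Theorem~\ref{th:variable_quasi_fejer}.

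The core is to combine three ingredients, all available from the preliminary subsection. Since Algorithm~\ref{alg:reg_method_v2} makes every iteration successful, $x_{k+1} - x_k = s_k$, and item~\ref{lemma:decr_f} of Lemma~\ref{lemma:decr} yields the sufficient decrease $\Delta_k - \Delta_{k+1} \ge \eta c \norm{s_k}^2$. Next, convexity gives the lower bound on the error
\[
\Delta_k = f(x_k) - f^* \le \nabla f(x_k)^T (x_k - x^*) \le \norm{\nabla f(x_k)}\,\norm{x_k - x^*} \le R\,\norm{\nabla f(x_k)}.
\]
Finally, Lemma~\ref{lemma:sk_gk} relates the gradient to the step, $\norm{\nabla f(x_k)} \le (\tau + b + \sigmamax T^{r-2})\norm{s_k}$. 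Chaining these, I obtain $\Delta_k \le R(\tau + b + \sigmamax T^{r-2})\norm{s_k}$, hence $\norm{s_k}^2 \ge \Delta_k^2 / \bigl(R^2(\tau+b+\sigmamax T^{r-2})^2\bigr)$, which upon substitution into the sufficient decrease produces the recursion
\[
\Delta_k - \Delta_{k+1} \ge \frac{\nu}{R^2}\,\Delta_k^2,
\]
with $\nu$ exactly as in the statement.

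To finish, I would treat the degenerate case first: if $\Delta_{\bar k} = 0$ at some index $\bar k$, then monotonicity of $\{f(x_k)\}$ from Lemma~\ref{lemma:f_conv} forces $\Delta_k = 0$ for all $k \ge \bar k$, and the claimed bound holds trivially since its right-hand side is non-negative. Otherwise $\Delta_k > 0$ for all $k$, and using the monotonicity $\Delta_{k+1} \le \Delta_k$ I can write
\[
\frac{1}{\Delta_{k+1}} - \frac{1}{\Delta_k} = \frac{\Delta_k - \Delta_{k+1}}{\Delta_k \Delta_{k+1}} \ge \frac{\Delta_k - \Delta_{k+1}}{\Delta_k^2} \ge \frac{\nu}{R^2}.
\]
Summing this telescoping inequality from $0$ to $k-1$ gives $1/\Delta_k \ge 1/\Delta_0 + \nu k / R^2$, and rearranging yields precisely the asserted bound $f(x_k)-f^* \le R^2(f(x_0)-f^*)/\bigl(R^2 + \nu k (f(x_0)-f^*)\bigr)$.

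The argument is essentially a bookkeeping assembly of existing lemmas; I expect no genuine obstacle beyond two points requiring care. The first is ensuring $R < \infty$ together with $\norm{x_k-x^*}\le R$, which hinges on recognizing that convexity lets us invoke the pseudoconvex variable-metric machinery of the preceding sections. The second is handling the $\Delta_k = 0$ edge case so that the division steps are legitimate, and lining up the constants $b$, $c$, $T$, $R$ so that the resulting coefficient matches the stated $\nu$.
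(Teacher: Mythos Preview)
Your proposal is correct and follows essentially the same route as the paper: combine the sufficient decrease from Lemma~\ref{lemma:decr}\ref{lemma:decr_f}, the gradient--step bound from Lemma~\ref{lemma:sk_gk}, and the convexity inequality $\Delta_k \le R\norm{\nabla f(x_k)}$ (with $R$ from Theorem~\ref{th:variable_quasi_fejer}\ref{prop_variable_quasi_fejer_R0} applied at $y = x^*$) to obtain the recursion $\Delta_k - \Delta_{k+1} \ge (\nu/R^2)\Delta_k^2$, then telescope $1/\Delta_{k+1} - 1/\Delta_k \ge \nu/R^2$. The only point you leave implicit is that the preliminary lemmas are stated for Algorithm~\ref{alg:reg_method}, so you should invoke the proof of Theorem~\ref{th:conv_1st_order} to justify that Algorithm~\ref{alg:reg_method_v2} satisfies~\eqref{f_decr} and hence is a special case of Algorithm~\ref{alg:reg_method}; your explicit treatment of the degenerate case $\Delta_{\bar k}=0$ is a nice addition that the paper omits.
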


\begin{proof}{Proof}
Reasoning as in the proof of Theorem~\ref{th:conv_1st_order},
we have that Algorithm~\ref{alg:reg_method_v2} is a special case of Algorithm~\ref{alg:reg_method} where~\eqref{U_empty} holds.
Now fix an iteration $k \ge 0$. Using item~\ref{lemma:decr_f} of Lemma~\ref{lemma:decr} and the fact that $x_{k+1} = x_k + s_k$, we get
\[
f(x_k) - f(x_{k+1}) \ge \eta c \norm{s_k}^2.
\]
Since $k \in \S$ as~\eqref{U_empty} holds, using Lemma~\ref{lemma:sk_gk} we obtain
\begin{equation}\label{f_decr_g}
f(x_k) - f(x_{k+1}) \ge \nu \norm{\nabla f(x_k)}^2,
\end{equation}
where $\nu$ is defined as in the statement.

Moreover, using the convexity of $f$ from Assumption~\ref{assump:f_conv}, we can write
\begin{equation}\label{f_err_pseudoconv}
f(x_k)-f^* \le \nabla f(x_k)^T (x_k-x^*) \le \norm{\nabla f(x_k)} \norm{x_k-x^*}.
\end{equation}
Since Algorithm~\ref{alg:reg_method_v2} is a special case of Algorithm~\ref{alg:reg_method}, while 
Assumption~\ref{assump:f} and Condition~\ref{cond:Q} are implied by Assumptions~\ref{assump:f_conv} and Condition~\ref{cond:Q_g}, respectively,
then we can use Proposition~\ref{prop:seq_alg_variable_quasi_fejer} to get that 
$\{x_k\}$ is a variable metric quasi-Fej{\'e}r monotone sequence.
Using item~\ref{prop_variable_quasi_fejer_R0} of Theorem~\ref{th:variable_quasi_fejer}, it follows that $\norm{x_k-x^*} \le R$.
Hence, from~\eqref{f_err_pseudoconv} we obtain
\begin{equation}\label{f_err}
f(x_k)-f^* \le R \norm{\nabla f(x_k)}.
\end{equation}
Now, denote
\[
E_k := f(x_k) - f^*.
\]
It follows from~\eqref{f_decr_g} and~\eqref{f_err} that
\[
E_k - E_{k+1} \ge \frac{\nu}{R^2} E_k^2.
\]
Then,
\[
\frac 1{E_{k+1}} - \frac 1{E_k} = \frac{E_k - E_{k+1}}{E_{k+1}E_k} \ge \frac{\nu E_k}{R^2 E_{k+1}} \ge \frac{\nu}{R^2}.
\]
Applying the above chain of inequalities recursively, we get
\[
\frac 1{E_{k+1}} \ge \frac 1{E_0} + \frac{\nu (k+1)}{R^2} = \frac{R^2 + E_0 \nu (k+1)}{E_0R^2}.
\]
Then, the desired result follows.
\end{proof}

\begin{remark}
The $\mathcal O(1/k)$ convergence rate given in Proposition~\ref{prop:rate_f_conv} does not require bounded level sets, thus showing an improvement over classical results of regularized methods
that use first-order information in the convex case (cfr.~\cite[Theorem~2.5]{cartis:2012b}).
\end{remark}

\section{An iterative procedure for matrix computation}\label{sec:matrix}
The requirements given in Condition~\ref{cond:Q} on the sequence of matrices $\{Q_k\}$ are rather general and can be satisfied in many ways.
As highlighted in Remark~\ref{rem:Q}, the easiest choice is setting $Q_k$ to an appropriate constant matrix at any iteration $k$.

In this section, we describe a strategy based on an iterative update of $Q_k$.
In what follows, given a symmetric matrix $M \in \R^{n \times n}$, we denote its lowest eigenvalue by $\lambdamin(M)$.

Given $\{\psi_k\}$ and $a$ satisfying~\eqref{eta_summable} and~\eqref{a_tau}, respectively, let $\{a_k\}$ be a sequence of scalars such that
\begin{equation}\label{ak}
a < a_{k+1} < a_k \quad \forall k \ge 0.
\end{equation}
At iteration $k$, assume that we have a symmetric matrix $Q_k \in \R^{n \times n}$ such that
\begin{equation}\label{Qk_pd}
Q_k \succeq a_k I
\end{equation}
and we have computed a symmetric matrix $\tilde Q_{k+1} \in \R^{n \times n}$.
Now define
\[
\Delta_k := \tilde Q_{k+1} - Q_k
\]
and choose $\beta_k \ge 0$ to set
\begin{equation}\label{Qk_upd}
Q_{k+1} = Q_k + \beta_k \Delta_k = (1-\beta_k) Q_k + \beta_k \tilde Q_{k+1}.
\end{equation}
We will show that, choosing $\beta_k$ sufficiently small, the resulting sequence $\{Q_k\}$ satisfies Condition~\ref{cond:Q} provided that $Q_0 \succ a_0 I$.

In particular, since~\eqref{Q_pos_def_assump} holds by assumption in view of~\eqref{ak}--\eqref{Qk_pd}, what we need to show is that,
using suitable values of $\beta_k$, \eqref{Q_prec_assump} holds and~\eqref{Qk_pd} is satisfied with $k$ replaced by $k+1$ as well.
If this is true, reasoning by induction we get that Condition~\ref{cond:Q} is satisfied provided that $Q_0 \succ a_0 I$.
In the sequel, we will describe two possible strategies to choose $\beta_k$.

\subsection{First option}\label{subsec:matrix1}
At iteration $k$, assuming without loss of generality that $\norm{\Delta_k}>0$ (otherwise $Q_{k+1}=Q_k$), we can set
\begin{equation}\label{beta_k_ref}
\beta_k \in
\begin{cases}
\biggl[0, \dfrac{\psi_k \lambdamin(Q_k)}{\norm{\Delta_k}}\biggr] & \quad \text{if } \lambdamin(\tilde Q_{k+1}) \ge \lambdamin(Q_k), \\[6pt]
\biggl[0, \min\biggl\{\dfrac{\psi_k \lambdamin(Q_k)}{\norm{\Delta_k}},\dfrac{\lambdamin(Q_k)-a_{k+1}}{\lambdamin(Q_k)-\lambdamin(\tilde Q_{k+1})}\biggr\}\biggr]
& \quad \text{if } \lambdamin(\tilde Q_{k+1}) < \lambdamin(Q_k).
\end{cases}
\end{equation}
As discussed above, to satisfy Condition~\ref{cond:Q} provided that $Q_0 \succ a_0 I$, we need to prove that~\eqref{Q_prec_assump} holds and that~\eqref{Qk_pd} is satisfied with $k$ replaced by $k+1$ as well.
Indeed, at every iteration $k$ we have the following:
\begin{itemize}
\item Since $\beta_k \le \psi_k \lambdamin(Q_k)/\norm{\Delta_k}$ from~\eqref{beta_k_ref}, then
	\[
	\beta_k \Delta_k \preceq \psi_k Q_k.
	\]
	Adding $Q_k$ to both terms and using~\eqref{Qk_upd}, it follows that~\eqref{Q_prec_assump} holds.
\item Using~\eqref{Qk_upd} and the Weyl's inequality, we can write
	\begin{equation}\label{Qk_pd2}
	\lambdamin(Q_{k+1}) \ge (1-\beta_k) \lambdamin(Q_k) + \beta_k \lambdamin(\tilde Q_{k+1}) = \beta_k (\lambdamin(\tilde Q_{k+1})-\lambdamin(Q_k))+\lambdamin(Q_k).
	\end{equation}
	From~\eqref{Qk_pd2} and the non-negativity of $\beta_k$ expressed in~\eqref{beta_k_ref}, it follows that:
	\begin{itemize}
		\item if $\lambdamin(\tilde Q_{k+1}) \ge \lambdamin(Q_k)$, then $\lambdamin(Q_{k+1}) \ge \lambdamin(Q_k)$,
		implying from~\eqref{ak}--\eqref{Qk_pd} that
		\[
		\lambdamin(Q_{k+1}) \ge a_{k+1} \quad \forall \beta_k \ge 0;
		\]
		\item if $\lambdamin(\tilde Q_{k+1}) < \lambdamin(Q_k)$, then
		\[
		\lambdamin(Q_{k+1}) \ge a_{k+1} \quad \forall \beta_k \le \dfrac{\lambdamin(Q_k)-a_{k+1}}{\lambdamin(Q_k)-\lambdamin(\tilde Q_{k+1})}.
		\]
	\end{itemize}
	From the above two cases, we get that~\eqref{beta_k_ref} guarantees $\lambdamin(Q_{k+1}) \ge a_{k+1}$, that is, \eqref{Qk_pd} is satisfied with $k$ replaced by $k+1$ as well.
\end{itemize}
Note that, provided that $\psi_k>0$, the upper bound on $\beta_k$ obtained from~\eqref{beta_k_ref} is positive in view of~\eqref{ak}--\eqref{Qk_pd}.

Finally, also note that the acceptable values of $\beta_k$ obtained from~\eqref{beta_k_ref} may be $\ge 1$.
This can be useful, for example, if one intends to use a quasi-Newton formula to compute $\tilde Q_{k+1}$ since
we may set $Q_{k+1} = \tilde Q_{k+1}$ in such a case, hence recovering the standard quasi-Newton update.

\subsection{Second option}\label{subsec:matrix2}
The strategy described in the previous subsection requires to compute $\lambdamin(Q_k)$ and $\lambdamin(\tilde Q_{k+1})$ at iteration $k$.
In order to reduce the computational burden associated with the lowest eigenvalue computation, here we describe an alternative strategy which, at iteration $k$, requires to compute $\lambdamin(Q_k)$ only.
This comes at the price of requiring
\begin{equation}\label{tildeQk_spd}
\tilde Q_{k+1} \succeq 0
\end{equation}
and, as to be discussed below, restricting the acceptable values of $\beta_k$.

In more detail, assuming again without loss of generality that $\norm{\Delta_k}>0$ (otherwise $Q_{k+1}=Q_k$), we can choose
\begin{equation}\label{beta_k}
0 \le \beta_k \le \min\biggl\{\frac{\psi_k \lambdamin(Q_k)}{\norm{\Delta_k}},1-\frac{a_{k+1}}{\lambdamin(Q_k)}\biggr\}.
\end{equation}
As discussed above, to satisfy Condition~\ref{cond:Q} provided that $Q_0 \succ a_0 I$, we need to prove that~\eqref{Q_prec_assump} holds and that~\eqref{Qk_pd} is satisfied with $k$ replaced by $k+1$ as well.
Indeed, at every iteration $k$ we have the following:
\begin{itemize}
\item Reasoning as in Subsection~\ref{subsec:matrix1}, since $\beta_k \le \psi_k \lambdamin(Q_k)/\norm{\Delta_k}$ from~\eqref{beta_k}, then
	\[
	\beta_k \Delta_k \preceq \psi_k Q_k.
	\]
	Adding $Q_k$ to both terms and using~\eqref{Qk_upd}, it follows that~\eqref{Q_prec_assump} holds.
\item Since $\beta_k \le 1 - a_{k+1}/\lambdamin(Q_k)$ from~\eqref{beta_k}, and using the fact that $\lambdamin(Q_k)>0$ from~\eqref{ak}--\eqref{Qk_pd}, we have
	\begin{equation}\label{Qk_pd1}
	(1-\beta_k) \lambdamin(Q_k) \ge a_{k+1}.
	\end{equation}
	Moreover, reasoning as in Subsection~\ref{subsec:matrix1}, using~\eqref{Qk_upd} and the Weyl's inequality, we get~\eqref{Qk_pd2}.
	In particular, since $\beta_k \lambdamin(\tilde Q_{k+1}) \ge 0$ from~\eqref{tildeQk_spd} and the non-negativity of $\beta_k$ expressed in~\eqref{beta_k},
	from~\eqref{Qk_pd2} we obtain
	\begin{equation}\label{Qk_pd3}
	\lambdamin(Q_{k+1}) \ge (1-\beta_k) \lambdamin(Q_k).
	\end{equation}
	Combining~\eqref{Qk_pd1} and~\eqref{Qk_pd3}, we get $\lambdamin(Q_{k+1}) \ge a_{k+1}$, that is, \eqref{Qk_pd} is satisfied with $k$ replaced by $k+1$ as well.
\end{itemize}
Note that, from~\eqref{ak}--\eqref{Qk_pd}, we have
\[
0 < a_{k+1} < \lambdamin(Q_k).
\]
Then, provided that $\psi_k>0$, the upper bound on $\beta_k$ obtained from~\eqref{beta_k} is positive, as for the strategy outlined in Subsection~\ref{subsec:matrix1}.
However, here the acceptable values of $\beta_k$ obtained from~\eqref{beta_k} are $<1$.
Namely, using~\eqref{beta_k} we can never set $Q_{k+1} = \tilde Q_{k+1}$, differently from the strategy described in Subsection~\ref{subsec:matrix1}.

\section{Numerical examples}\label{sec:num}
In this section, we analyze the practical performance of the proposed algorithmic framework using different strategies
to compute a sequence of matrices $\{Q_k\}$ satisfying Condition~\ref{cond:Q}. The experiments were run in Matlab R2025b on an Apple MacBook Pro with an Apple M1 Pro Chip and 16 GB RAM.

We consider the ARC algorithm~\cite{cartis:2011a}, which is a special case of Algorithm~\ref{alg:reg_method} with $r=3$. In particular, at each iteration $k$:
\begin{itemize}
\item $s_k$ is computed to satisfy, together with condition~\eqref{sk_cond_gmk} required by our scheme, also the Cauchy condition
	\[
	m_k(s_k) \le m_k(s_k^\text{C}), \quad \text{where} \quad s_k^{\text{C}} = -\alpha_k^{\text{C}} \nabla f(x_k), \quad \alpha_k^{\text{C}} \in \argmin_{\alpha \ge 0} m_k(-\alpha \nabla f(x_k));
	\]
\item after computing $\rho_k$ as in~\eqref{rho1}, we set
	\[
	x_{k+1} =
	\begin{cases}
	x_k + s_k \quad & \text{if } \rho_k \ge \eta_1, \\
	x_k \quad & \text{otherwise},
	\end{cases}
	\]
	and
	\[
	\sigma_{k+1} =
	\begin{cases}
	\max\{\sigma_k/2,\sigmamin\} \quad & \text{if } \rho_k \ge \eta_2, \\
	\sigma_k \quad & \text{if } \eta_1 \le \rho_k < \eta_2, \\
	2\sigma_k \quad & \text{if } \rho_k < \eta_1,
	\end{cases}
	\]
	with $\eta_1 = 0.1$, $\eta_2 = 0.9$, $\sigmamin = 10^{-6}$ and $\sigma_0 = 1$.
\end{itemize}

We consider four versions of ARC, each using a different strategy to compute $Q_k$ at iteration $k$ among those analyzed in Sections~\ref{sec:alg_example}--\ref{sec:matrix}:
\begin{itemize}
\item ARC$_{1}$ uses the strategy described in Section~\ref{sec:alg_example} with a constant matrix $Q_k = aI$, where $a$ is such that~\eqref{a_tau_L_eta} holds with equality.
\item ARC$_{2}$ uses the strategy described in Section~\ref{sec:matrix}, where $\tilde Q_{k+1}$ is the BFGS update~\cite{nocedal:2006} and $\beta_k$ is computed as described in Subsection~\ref{subsec:matrix1}.
In particular, $\beta_k$ is chosen as the minimum between $1$ and the largest value obtained from~\eqref{beta_k_ref} so as to obtain the standard BFGS update (i.e., $\beta_k=1$) whenever possible.
\item ARC$_{3}$ differs from ARC$_{2}$ in that $\beta_k$ is computed as described in Subsection~\ref{subsec:matrix2}. In particular, $\beta_k$ is chosen as the largest value obtained from~\eqref{beta_k} (which is surely $<1$).
\item ARC$_{4}$ differs from ARC$_{2}$ in that $\tilde Q_{k+1}$ is a diagonal matrix obtained from the diagonal part of the ordinary BFGS update~\cite{li:2022}, with $Q_k$ diagonal as well.
Note that,  in this case, computing $\lambdamin(Q_k)$ and $\lambdamin(\tilde Q_{k+1})$ has a negligible cost.
\end{itemize}

Observe that ARC$_{1}$ is a special case of Algorithm~\ref{alg:reg_method_v2} which trivially satisfies Condition~\ref{cond:Q_g} and, consequently, Condition~\ref{cond:Q},
regardless of how $\{\psi_k\}$ is chosen. Moreover, in view of Theorem~\ref{th:conv_1st_order}, all iterations of ARC$_{1}$ are successful and $\{\sigma_k\}$ is therefore bounded from above by $\sigma_0$.

For the other variants, which implement the strategies described in Section~\ref{sec:matrix}, we use $\{\psi_k\} = \{10^8(k+1)^{-2}\}$ and $\{a_k\} = \{a + (k+1)^{-1}\}$, with $a = 2\tau + 10^{-6}$.
Moreover, $\sigma_k$ remains below $\sigmamax = 10^{15}$ in all our experiments. However, in case $\sigma_k$ exceeds a pre-specified $\sigmamax$, one may switch to ARC$_{1}$.

We consider the following two classes of pseudoconvex problems that admit multiple optimal solutions:
\begin{itemize}
\item \textit{Binary classification with squared hinge loss.}
	Let $\{v_1,\ldots,v_m\} \subseteq \Rn$, with $m=20,000$ and $n = 1000$, be a set of vectors that can be divided into two equally sized groups by a hyperplane passing through the origin with margin $0.01$.
	Using labels $y^1,\ldots,y^m \in \{\pm 1\}$ assigned to the points, we want to compute a separating hyperplane by minimizing a squared hinge loss as in the following problem:
	\[
	\min f(x) = \sum_{i=1}^m \max\{0, 1-y^i (v^i)^T x\}^2.
	\]
	Note that $f$ is convex and any separating hyperplane passing through the origin, after an appropriate rescaling, defines an optimal solution.
\item \textit{Linear feasibility with log-penalty.} Let $A \in \R^{m \times n}$ be the matrix of a linear system, with $m = 100$ and $n = 1000$. We want to compute a solution of $Ax=0$ by minimizing a log-penalty loss as in the following problem:
	\[
	\min f(x) = \log(1+\|Ax\|^2).
	\]
	Note that $f$ is pseudoconvex and any vector in the kernel of $A$ is an optimal solution.
\end{itemize}

For both classes of problems, we run the considered algorithms on 10 randomly generated instances, and the average results are reported in Figure~\ref{fig:plot}. We observe that ARC$_1$, which uses a constant matrix at all iterations, performs the worst on both problems. Among the other three versions, ARC$_2$ and ARC$_3$ yield very similar results and outperform ARC$_4$, especially on the first class of problems.
Overall, these results seem to suggest that a BFGS-type approach can be beneficial for computing the sequence of matrices $\{Q_k\}$.

\begin{figure}
\centering
\subfloat[]
{\includegraphics{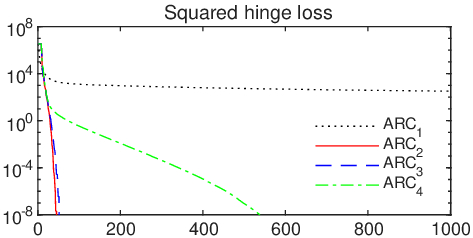}} \quad
\subfloat[]
{\includegraphics{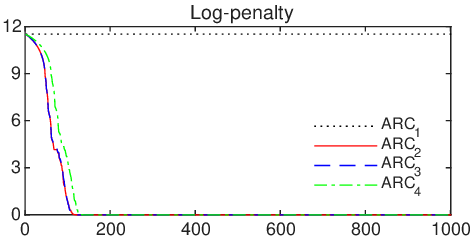}}
\caption{Objective error decrease on (a) binary classification problems using squared hinge loss and (b) linear feasibility problems with log-penalty. In the left plot, the $y$ axis is in logarithmic scale.}
\label{fig:plot}
\end{figure}

\section{Conclusions}\label{sec:conclusions}
In this paper, we have established some convergence properties of regularized methods for unconstrained optimization, that is,
algorithms that use quadratic models regularized by a power $r \ge 3$ of the norm of the step,
focusing on the case where only the objective function and its gradient are evaluated.
We have shown that, if the objective function is pseudoconvex and the regularized model is properly chosen at every iteration,
then the whole sequence of points generated by this class of algorithms converges.
Moreover, the convergence does not require bounded level sets.

The key to obtain the above result is choosing, at every iteration, the matrix of the quadratic model in such a way that the resulting sequence of points
is variable metric quasi-Fej{\'e}r monotone.

We finally observe that the convergence of the whole sequence of points was established in~\cite{cartis:2011a,nesterov:2006}
with $r = 3$ using second-order information.
Specifically, the result was obtained in~\cite[Theorem~4.4]{cartis:2011a} when the Hessian of $f$ is continuous and there exists
a subsequence of iterates converging to a point where the Hessian of $f$ is non-singular;
whereas the result was obtained in~\cite[Theorem~3]{nesterov:2006} by employing exact minimizers of the regularized model built with the Hessian matrix of $f$,
the latter assumed to be Lipschitz continuous and such that its smallest eigenvalue
at the starting point is sufficiently large compared to the gradient norm.
We see that our analysis differs mainly in that we do not require $f$ to have second-order derivatives (and we allow for a regularization power $r \ge 3$), although we impose the additional pseudoconvexity condition on $f$.
Moreover, compared to~\cite{cartis:2011a}, our results do not need to assume specific properties at the limit,
whereas, compared to~\cite{nesterov:2006}, our results are independent of the starting point and use approximate minimizers of the regularized model.

\bibliography{cristofari2026biblio}

\end{document}